\theoremstyle{plain}
    \newtheorem{thm}{Theorem}[section]
     \newtheorem{conjecture}[thm]{Conjecture}
    \newtheorem{corollary}[thm]{Corollary}
    \newtheorem{lemma}[thm]{Lemma}
    \newtheorem{proposition}[thm]{Proposition}
    \newtheorem{question}[thm]{Question}
    \newtheorem{theorem}[thm]{Theorem}
\theoremstyle{definition}
    \newtheorem{definition}[thm]{Definition}
    \newtheorem{notation}[thm]{Notation}
    \newtheorem*{notation*}{Notation and Terminology}
    \newtheorem{remark}[thm]{Remark}
\theoremstyle{remark}
\newcommand{\bP}{\mathbb{P}}
\newcommand{\bQ}{\mathbb{Q}}
\newcommand{\bR}{\mathbb{R}}
\newcommand{\bZ}{\mathbb{Z}}
\newcommand{\alb}{\operatorname{alb}}
\newcommand{\Amp}{\operatorname{Amp}}
\newcommand{\Aut}{\operatorname{Aut}}
\newcommand{\Bir}{\operatorname{Bir}}
\newcommand{\id}{\operatorname{id}}
\newcommand{\Nef}{\operatorname{Nef}}
\newcommand{\NS}{\operatorname{NS}}
\newcommand{\N}{\operatorname{N}}
\newcommand{\Alb}{\operatorname{Alb}}
\newcommand{\Pic}{\operatorname{Pic}}
\newcommand{\arxiv}[1]{\href{https://arxiv.org/abs/#1}{{\tt arXiv:#1}}}
\newcommand{\mstriangle}[1]{
\begin{tikzpicture}[x=0.3cm,y=0.3cm]
\draw (-0.4,-0.433) -- (1.4,-0.433);
\draw (-0.2,-0.7794) -- (0.7,0.7794);
\draw (1.2,-0.7794) -- (0.3,0.7794);
\end{tikzpicture}
}
\newcommand{\mssharp}[1]{
\begin{tikzpicture}[x=0.3cm,y=0.3cm]
\draw (-0.8,-0.5) -- (0.8,-0.5);
\draw (-0.8,0.5) -- (0.8,0.5);
\draw (-0.5,-0.8) -- (-0.5,0.8);
\draw (0.5,-0.8) -- (0.5,0.8);
\end{tikzpicture}
}
\newcommand{\Rmnum}[1]{\expandafter\@slowromancap\romannumeral #1@}
\begin{document}

\title[Kawaguchi-Silverman conjecture]
{Kawaguchi-Silverman conjecture on automorphisms of projective threefolds}

\author{Sichen Li}
\address{
School of Mathematics, East China University of Science and Technology, Shanghai 200237, P. R. China}
\address{Shanghai Key Laboratory of Pure Mathematics and Mathematical Practice, East China Normal University, Shanghai, 200241, P. R. China}
\email{\href{mailto:sichenli@ecust.edu.cn}{sichenli@ecust.edu.cn}}
\begin{abstract}
Under the framework of dynamics on normal projective varieties by Kawamata, Nakayama and Zhang \cite{Kawamata85,Nakayama10,NZ09,NZ10,Zhang16}, Hu and the author \cite{HL21}, we may reduce Kawaguchi-Silverman conjecture for automorphisms $f$ on normal projective threefolds $X$  with  either the canonical divisor $K_X$ is trivial or negative Kodaira dimension to the following two case: (i)  $f$ is a primitively automorphism of a weak Calabi-Yau threefold (ii) $X$ is a rationally connected threefold.
And we prove Kawaguchi-Silverman conjecture is true for automorphisms of normal projective varieties $X$ with the irregularity $q(X)\ge\dim X-1$.
Finally, we discuss Kawaguchi-Silverman conjecture on normal projective varieties with Picard number two.
\end{abstract}
\subjclass[2010]{
37P55, 
14E30,  
08A35.
}

\keywords{Kawaguchi-Silverman conjecture, weak Calabi-Yau, rationally connected, special MRC fibration}
\dedicatory{Dedicated to Professor De-Qi Zhang on the occasion of his 60th birthday}
\thanks{The research is supported by  Shanghai Sailing Program (No. 23YF1409300) and Science and Technology Commission of Shanghai Municipality (No. 22dz2229014).}
 \maketitle 
 \section{Introduction}
\subsection{Kawaguchi-Silverman conjecture}
Let $X$ be a normal projective variety of dimension $n\ge1$  over a  global field $K$ of characteristic 0.
Let $f: X\dashrightarrow X$ be a dominant rational map.
Then the {\it $k$-th dynamical degree of  $f$} (cf. \cite{Dang20,DS05,Truong20}) is defined as
\begin{equation*}
                                   d_k(f):=\lim_{s\to\infty}\big((f^s)^*(H^k)\cdot H^{n-k}\big)^{1/s},
\end{equation*}
where $H$ is an ample Cartier divisor on $X$.
A result of Dinh and Sibony \cite{DS05} says that this limit exists and is independent of the choice of  the ample divisor $H$.

In \cite{KS16-tams,KS16}, Kawaguchi and Silverman studied another dynamical invariant called the {\it arithmetic degree}, which we now recall.
Assume that $X$ and $f$ are defined over the algebraic closure $\overline{\mathbb Q}$ of the rational number field $\bQ$, and write $X(\overline{\mathbb Q})_f$ for the set of points $P$ whose forward $f$-orbit
\begin{equation*}
                                    \mathcal O_f(P)=\big\{ P, f(P), f^2(P),\cdots\big\}
\end{equation*}
is well-defined.
Further, let
\begin{equation*}
                        h_H: X(\overline{\mathbb Q})\to [0,\infty)
\end{equation*}
be a Weil height on $X$ associated with an ample divisor $H$ (cf. \cite{BG06,HS00,Lang83}) , and let $h_H^+=\max\big\{1, h_H\big\}$.
For $P\in X(\overline{\mathbb Q})_f$, we define {\it the lower and the upper arithmetic degree of $f$ at $x$} by
$$
\underline{\alpha}_f(P)=\liminf_{n\to\infty}h_H^+(f^n(P))^{\frac{1}{n}},
$$ 
$$
\overline{\alpha}_f(P)=\limsup_{n\to\infty}h_H^+(f^n(P))^{\frac{1}{n}}.
$$
Both of there quantities are independent of the choice of ample divisor $H$ and height function $h_H$ (\cite[Proposition 12]{KS16}).
When $\overline{\alpha}_f(P)=\underline{\alpha}_f(P)$, we write
$$
           \alpha_f(P)=\lim_{n\to+\infty}h_H^+(f^n(P))^{\frac{1}{n}}
$$
and call it the arithmetic degree.
When $f$ is a morphism, the above limit exists by \cite{KS16}.

The following Kawaguchi-Silverman conjecture (KSC) asserts that for a dominant self-map $f: X\dashrightarrow X$ of a projective variety $X$ over $\overline{\mathbb Q}$, the arithmetic degree $\alpha_f(x)$ of any point $x$ with Zariski dense $f$-orbit is equal to the first dynamical degree $d_1(f)$ of $f$.
\begin{conjecture}
\cite[Conjecture 6]{KS16}
\label{KSC}
Let $f: X\dashrightarrow X$ be a dominant rational map of a normal projective variety $X$ over $\overline{\mathbb Q}$, and let $x\in X(\overline{\mathbb Q})_f$.
If $\mathcal O_f(x)$ is Zariski dense in $X$ and there exists the arithmetic degree $\alpha_f(x)$ at $x$, then $\alpha_f(x)=d_1(f)$.
\end{conjecture}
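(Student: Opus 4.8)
\emph{The Kawaguchi--Silverman conjecture is open in general, so what follows is not a full proof but the strategy I would pursue, together with the point at which it stalls.} For a morphism $f$ the upper bound $\overline{\alpha}_f(x)\le d_1(f)$ is known unconditionally, and $\underline{\alpha}_f(x)\ge 1$ always, so if $d_1(f)=1$ there is nothing to prove; I would therefore assume $d_1(f)>1$ and aim at the lower bound $\underline{\alpha}_f(x)\ge d_1(f)$ for $x$ with Zariski-dense orbit.

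The first move is to dismantle $X$ along $f$-equivariant fibrations. The Albanese morphism $\alpha_X\colon X\to\Alb(X)$, the Iitaka fibration when $\kappa(X)\ge 0$, the MRC fibration, and the special MRC fibration of Hu and the author \cite{HL21} are all canonical, hence compatible with $f$; on each base $f$ induces an automorphism. For an equivariant surjection $\pi\colon X\to Y$ I would combine (a) the product formula expressing $d_1(f)$ as the larger of $d_1(f|_Y)$ and $d_1(f|_F)$ for a general fibre $F$, with (b) the matching comparison for arithmetic degrees: a point with Zariski-dense orbit in $X$ has Zariski-dense image-orbit in $Y$, so when $d_1(f)=d_1(f|_Y)$ one gets $\alpha_f(x)\ge\alpha_{f|_Y}(\pi(x))=d_1(f|_Y)=d_1(f)$ from functoriality of heights and KSC on $Y$, while the residual case $d_1(f)=d_1(f|_F)>d_1(f|_Y)$ is handled by the fibre-specialisation arguments in the literature (Matsuzawa, Meng--Zhang, Lin). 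This machinery already settles abelian varieties (Kawaguchi--Silverman), varieties of general type (where $\Aut$ is finite, so $d_1(f)=1$), and surfaces, and it reduces the threefold cases with $K_X$ trivial or $\kappa(X)<0$ to exactly (i) $f$ primitive on a weak Calabi--Yau threefold and (ii) $X$ rationally connected, as recalled above.

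For the range $q(X)\ge\dim X-1$ I would run the same scheme through the Albanese alone: the image $\overline{\alpha_X(X)}$ generates $\Alb(X)$, and the general fibre of the induced equivariant fibration has dimension at most $\dim X-\dim\overline{\alpha_X(X)}$, which is $\le 2$ in the cases of interest (in particular for $\dim X\le 3$), so the fibres are points, curves, or surfaces, all covered by known KSC, while the base contributes through KSC for abelian varieties; when $\alpha_X$ is generically finite one descends to $\Alb(X)$ or an abelian subvariety thereof, using semiconjugacy-invariance of $d_1$ and functoriality of heights to absorb the finite degree. Combining base and fibre via the comparison of the previous paragraph gives $\alpha_f(x)=d_1(f)$.

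The obstacle is case (ii): for a rationally connected $X$ there is no Albanese, no Iitaka fibration, and the MRC fibration is trivial, so the only remaining handle is the $f$-equivariant minimal model program — $K_X$-negative extremal contractions, flips, and Mori fibre space structures — and one must propagate both $d_1(f)$ and $\alpha_f(x)$ through every such step, in particular across flips and down the fibres of a Mori fibre space whose base is again rationally connected. This is precisely where the conjecture is genuinely open, and it is why the present paper restricts to dimension three and, inside the Calabi--Yau regime, to primitive automorphisms. I do not expect to remove it; the realistic deliverables are the conditional reduction to (i)--(ii) and the unconditional theorem in the large-irregularity range.
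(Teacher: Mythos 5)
You correctly observe that Conjecture~\ref{KSC} is an open conjecture, not a theorem proved in this paper; the paper only establishes the partial reductions (Theorems~\ref{reduced-auto} and~\ref{q(X)>X-1}), and your sketch of equivariant fibrations, the Dinh--Nguy\^en--Truong product formula, Albanese and special MRC descent, and the residual rationally connected obstruction matches the paper's route to those partial results. One small imprecision: under $q(X)\ge\dim X-1$ the Albanese fibres have dimension at most $1$ (not $2$), and the paper handles that case via Proposition~\ref{x=y-1}, where $f$ being an automorphism and $\dim Y=\dim X-1$ force $d_{\dim X}(f)=1$ and hence $d_1(f|_\pi)\le d_1(g)$, but this does not affect the overall correctness of your reduction.
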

\begin{remark}\label{non-dense}
It is known that any dominant rational self-map of a normal projective variety $X$ with the Kodaira dimension $\kappa(X)>0$  does not have any Zariski dense forward orbits (cf. \cite[Theorem A]{NZ09} and \cite[Theorem 14.10]{Ueno75}).
It is known that $1\le \underline{\alpha}_f(x)\le \overline{\alpha}_f(x)\le d_1(f)$ by \cite[Theorem 4]{KS16} and \cite[Theorem 1.4]{Matsuzawa20b}.
Then KSC is true for $f$ if $d_1(f)=1$.
So,  we may assume that $d_1(f)>1$.
\end{remark}
\subsection{Historical note}\label{History}
Let $f$ be a surjective endomorphism of a  normal projective variety $X$.
When $f$ is an automorphism, one can further take an $f$-equivariant resolution (cf. \cite[Theorem 13.2]{BM97}).
Kawaguchi and Silverman  showed in \cite[Theorem 2(c)]{KS14} that KSC holds for such $f$ when $\dim X=2$.
When $f$ is non-isomorphic and $X$ is smooth, Matsuzawa, Sano and Shibata \cite[Theorem 1.3]{MSS18} proved that KSC holds for $f$, by reducing the problem to three precise cases: $\mathbb P^1$-bundles, hyperelliptic surfaces, and surfaces of Kodaira dimension one.
For a singular projective surface $X$, running an $f$-equivariant minimal model program (MMP) after iterating $f$,  Meng and Zhang \cite[Theorem 1.3]{MZ19} proved that KSC holds for any surjective endomorphism of a projective surface.

Assume that $f$ is a surjective endomorphism of a normal projective variety $X$ and $d_1(f)>1$.
Kawaguchi and Silverman \cite[Theorem 5]{KS16} proved KSC when $f$ is polarized, i.e. there is an ample Cartier divisor $D$ and integer $q>1$ such that $f^*D\sim qD$.
Matsuzawa, Meng, Shibata, Zhang and Zhong \cite[Theorem 1.9(1)]{MMSZZ22} proved that there exists a nef Cartier $\bR$-divisor $D'$ such that $f^*D'\sim d_1(f)D'$.
And Matsuzawa and the author \cite[Proposition 5.2(1)]{LM21} proved that KSC holds for $f$ if the Iitaka dimension $\kappa(X,D)>0$.

Silverman \cite[Theorem 1.2]{Silverman17} proved that KSC holds for any dominant self-map on abelian varieties.
Also,  KSC holds for any  self-morphism  on semi-abelian varieties by Matsuzawa and Sano \cite[Theorem 1.1]{MS20}.

Matsuzawa \cite{Matsuzawa20a} proved that KSC is true when $X$ is a projective toric variety, a linear algebraic group or a variety of Fano type.
Moreover, he  established in \cite[Theorem 4.1]{Matsuzawa20a} that KSC is true for any surjective endomorphism $f$ on $X$ when $\NS_{\bQ}(X)\cong\Pic_{\bQ}(X)$ and the nef cone is generated by finitely many semi-ample integral divisors.

Assume that $f$ is a surjective endomorphism of a projective variety $X$.
In \cite[Theorem 1.2 and Proposition 1.7]{LS21}, Lesieutre and Satriano proved that KSC is true when $X$ is  a  hyper-K\"ahler variety, a smooth projective threefold with $\kappa(X)=0$ and $\deg f>1$.
Moreover, KSC is true when $f$ is an automorphism of a smooth projective variety $X$ with Picard number $\rho(X)=2$ by Shibata \cite[Theorem 4.2]{Shibata19} or \cite[Theorem 2.30]{LS21}.

 When $X$ is a smooth rationally connected variety admitting an int-amplified endomorphism, KSC holds for every surjective endomorphism of such $X$ by Meng and Zhang \cite[Theorem 1.11]{MZ19} and Matsuzawa and Yoshikawa \cite[Theorem 1.1]{MY22}.
Moreover, let $X$ be a $\mathbb Q$-factorial klt projective variety with the algebraic fundamental group $\pi_1^{\mathrm{alg}}(X_{\mathrm{reg}})$ of the smooth locus $X_{\mathrm{reg}}$ of $X$ being finite.
Suppose $X$ admits an int-amplified endomorphism.
Meng, Matsuzawa, Shibata and Zhang \cite[Theorem 6.3]{MMSZ23} proved that KSC holds for any surjective endomorphism of such $X$ by \cite[Theorem 1.7]{MZ19}.

Let $\pi:X\to Y$ be a surjective endomorphism of  normal projective variety.
Suppose $f$ (resp. $g$) is a surjective endomorphisms  of $X$ (resp. $Y$) such that $\pi \circ  f=g \circ \pi$.
It is well known that KSC holds for $f$ if (1) $d_1(f)=d_1(g)$ and (2) KSC holds for $g$ (cf. \cite[Theorem 3.4]{LS21}).
Now let $X$ be a projective bundle over a smooth projective variety $Y$ with the Picard number one.
Lesieutre and Satriano \cite{LS21} proved that KSC is true for any surjective endomorphism of $X$ when $Y=\bP^1$.
Also, Matsuzawa and the author  \cite{LM21} proved that KSC is true for any surjective endomorphism of $X$ when $Y$ is Fano.

Chen, Lin and Oguiso \cite{CLO22} showed that KSC is true for three cases: (i)   birational automorphisms of  a smooth projective variety $X$ with $\kappa(X)=0$ and the irregularity $q(X)\ge \dim X-1$ (ii) birational automorphisms of  an irregular smooth threefold $X$ modulo the case that $X$ is covered by rational surfaces (iii) automorphisms of an irregular smooth threefold $X$.

A dominant rational self-map on a projective variety is called $p$-cohomological hyperbolic if the $p$-th dynamical degree is strictly larger than other dynamical degrees.
Matsuzawa and Wang \cite{MW22} showed that KSC is true for a 1-cohomological hyperbolic of a smooth projective variety, and the arithmetic degrees can be transcendental for dominant rational self-maps by using the striking result of Bell, Diller and Jonsson \cite{BDJ20}.
\subsection{Main results}
Let $f$ be a surjective endomorphism of a normal projective variety $X$.
We say that a rational map $\pi: X\dashrightarrow Y$ is {\it  $f$-equivariant}, if there exists a surjective endomorphism $g$ of $Y$ such that  $\pi\circ f=g\circ\pi$.
When $f$ is an automorphism, there is a resolution of singularities $\pi: X'\to X$ and an automorphism $f'$ on $X'$ such that $\pi\circ f'=f\circ \pi$ (cf. \cite[Theorem 13.2]{BM97}).
Therefore, we reduce KSC for $f$  to KSC for $f'$, which is an automorphism on a smooth projective variety.
However, there is no equivarent minimal model program for automorphism groups of projective varieties in general (cf. \cite[Remark 1.3(1)]{HL21}).
So we consider the special but important case where  $X$ is minimal, e.g. the canonical divisor $K_X\sim 0$ when $\dim X=3$ and $\kappa(X)=0$.

Due to work of H\"oring and Peternell \cite[Theorem 1.5]{HP19}, we have
the Beauville-Bogomolov decomposition for minimal models with trivial canonical class as follows.
Let $X$ be a normal projective variety at most klt singularities such that $K_X\equiv0$.
Then there exists a finite cover, \'etale in codimension one $\pi: \widetilde{X}\to X$  such that
$$
   \widetilde{X}\cong A\times \prod  Y_j \times \prod  Z_k
$$
where $A$ is an abelian variety, the $Y_j$ are singular Calabi-Yau varieties and the $Z_k$ are singular irreducible holomorphic symplectic varieties (see \cite[Definition 1.3]{GGK19}).
However, it is still unclear  whether we can always lift the automorphisms of $X$ to some splitting cover $\widetilde{X}$ (cf. \cite[Remark 3.5]{HL21}). 
Instead of utilizing their strong decomposition theorem,  we use a weak version (cf. \cite[Lemma 2.7]{HL21}) due to Kawamata \cite{Kawamata85}, and developed by Nakayama-Zhang \cite{NZ10}.
For more details about automorphisms of projective varieties with trivial canonical divisor, we refer to \cite[Theorem 1.1 and 2.4]{Zhang16} and \cite[Theorem 1.2]{HL21}.
For automorphisms on projective varieties with negative Kodaira dimension, we  can use special MRC fibration due to Nakayama \cite{Nakayama10} which have the descent property.
We refer  to \cite[Lemma 2.11]{HL21} for more details about it.

The notion of a primitively birational  self-map was introduced by  Zhang \cite{Zhang09-JDG} as follows.
\begin{definition}\label{primitive}
\cite{Zhang09-JDG}
A birational self-map $f: X\to X$ is {\it imprimitive} if there exist a variety $B$ with $1\le\dim B<\dim X$, a birational map $g: B\dashrightarrow B$, and a dominant rational map $\pi: X\dashrightarrow B$ such that $\pi\circ f=g\circ\pi$.
The map $f$ is called {\it primitive} if it is not imprimitive.
\end{definition}
Below is our main result of KSC on automorphisms of normal projective threefolds.
\begin{theorem}\label{reduced-auto}
Let $f$ be an automorphism of a  normal projective threefold with only klt singularities.
Suppose $K_X\sim_\bQ0$ or $\kappa(X)=-\infty$.
Then we reduce  Conjecture \ref{KSC} for $(X,f)$ to the following cases:
\begin{enumerate}
	\item $f$ is a primitively automorphism of a  weak Calabi-Yau threefold.
	\item $X$ is a rationally connected threefold.
\end{enumerate}
\end{theorem}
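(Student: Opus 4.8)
The plan is to carry out a structural case analysis, reducing every $(X,f)$ outside of cases~(1) and~(2) to a setting where KSC is already known. I would use repeatedly: that by Remark~\ref{non-dense} we may assume $d_1(f)>1$; that one may replace $(X,f)$ by an $f$-equivariant resolution \cite[Theorem 13.2]{BM97}, or, after replacing $f$ by a power, by a connected finite \'etale cover to which $f$ lifts, without affecting $d_1$ or the validity of KSC; the fibration criterion \cite[Theorem~3.4]{LS21} (after an equivariant resolution if $\pi$ is only rational), that if $\pi\colon X\dashrightarrow Y$ is $f$-equivariant with induced $g$, $d_1(f)=d_1(g)$ and KSC holds for $g$, then KSC holds for $f$, combined with the product formula $d_1(f)=\max\{d_1(g),d_1(f|_F)\}$ for a general fibre $F$; and the known cases of KSC, namely abelian varieties (Silverman), all surjective endomorphisms of projective surfaces including singular ones (Meng--Zhang), and automorphisms of irregular smooth threefolds (Chen--Lin--Oguiso \cite{CLO22}).

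I would first dispose of irregular $X$: if $q(X)>0$, or more generally if some connected finite \'etale cover of $X$ has positive irregularity, pass to that cover and then to an equivariant resolution, and invoke \cite{CLO22}; otherwise $X$ is a weak Calabi--Yau threefold. Assume next $K_X\sim_{\mathbb Q}0$. If $f$ is primitive we are in case~(1). If $f$ is imprimitive, choose $\pi\colon X\dashrightarrow B$ with $1\le\dim B\le 2$ and $g\colon B\dashrightarrow B$ with $\pi\circ f=g\circ\pi$; replacing $f$ and $g$ by powers and $(B,\pi)$ by a smooth model we may take $\pi$ a fibration of smooth projective varieties and $g\in\Aut(B)$. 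By subadditivity of the Kodaira dimension over a curve base together with $q(X)=0$, if $\dim B=1$ then $B\cong\mathbb P^1$, while if $\dim B=2$ the general fibre is a curve, so $d_1(f|_F)=1$, hence $d_1(f)=d_1(g)$, and since $B$ is a projective surface, KSC for $g$ (Meng--Zhang) gives KSC for $f$. In the case $\dim B=1$ we have $d_1(g)=1$, so $d_1(f)=d_1(f|_F)$; if this equals $1$ there is nothing to prove, and otherwise $F$ is a surface carrying a positive-entropy automorphism, and the key step is to show that, after a further power of $f$, one has $g=\id_{\mathbb P^1}$, so that $f$ preserves each fibre; then no orbit is Zariski dense and KSC holds vacuously.

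Next assume $\kappa(X)=-\infty$, so $X$ is uniruled. After replacing $f$ by a power, take Nakayama's special MRC fibration $\psi\colon X\dashrightarrow Z$, which by \cite[Lemma~2.11]{HL21} is $f$-equivariant with an automorphism $h$ of the non-uniruled normal projective variety $Z$ and has the descent property. If $\dim Z=0$, then $X$ is rationally connected, which is case~(2). Since $X$ is uniruled, $\dim Z\le 2$; if $\dim Z=1$ then $Z$ is a smooth curve that, being non-uniruled, is not $\mathbb P^1$, hence of positive genus, forcing $q(X)>0$ against the previous reduction; so $\dim Z=2$, the general fibre of $\psi$ is a rationally connected curve, i.e. $\cong\mathbb P^1$, and $d_1(f|_F)=1$. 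Thus $d_1(f)=d_1(h)$, and KSC for the surface automorphism $h$ (Meng--Zhang), via the fibration criterion, yields KSC for $f$.

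Putting these together, the only $(X,f)$ for which KSC remains open are those in cases~(1) and~(2). The main obstacle is the imprimitive weak Calabi--Yau subcase in which $X$ fibres over $\mathbb P^1$ with a K3 or abelian surface of positive entropy as general fibre: one must prove that the induced automorphism of $\mathbb P^1$ is torsion after passing to a power of $f$. Equivalently, one has to exclude the possibility that $f$ permutes a genuinely varying one-parameter family of such surfaces by an infinite-order transformation of the base; this requires the rigidity (non-isotriviality) of such fibrations on a weak Calabi--Yau threefold, and must be combined with the fact that automorphisms lift only up to iteration to the splitting finite \'etale cover provided by the weak Beauville--Bogomolov decomposition \cite[Lemma~2.7]{HL21}.
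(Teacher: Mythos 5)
Your reductions trace essentially the same path the paper takes: dispose of irregular $X$ via \cite{CLO22}; for $K_X\sim_{\bQ}0$ use the weak Beauville--Bogomolov splitting of \cite[Lemma~2.7]{HL21} to land on the weak Calabi--Yau case; for $\kappa(X)=-\infty$ use Nakayama's special MRC fibration; and, in the imprimitive weak Calabi--Yau subcase, fiber over a base $B$, settling $\dim B=2$ by \cite[Theorem~1.3]{MZ19} together with the product formula. Two points need attention.

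First, and most importantly, you leave the $\dim B=1$ subcase (so $B\cong\bP^1$ once $q(X)=0$) as ``the main obstacle,'' correctly identifying that one must show the induced automorphism $g$ of $\bP^1$ becomes the identity after iteration, but you do not prove it. This is precisely what Theorem~\ref{KSC-P1} supplies, and it is proved cleanly: pass to an $f$-equivariant resolution, let $Z\subset\bP^1$ be the locus of singular fibers, note $g(Z)=Z$ and $Z$ is finite; by Viehweg--Zuo \cite[Theorem~0.2]{VZ01} $|Z|\ge 3$, so after iteration $g$ fixes $\ge 3$ points and hence $g=\id_{\bP^1}$, giving an $f^n$-invariant non-constant rational function and hence no Zariski-dense orbit. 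Without this (or an equivalent isotriviality input), your argument does not yield the theorem. In fact the paper organizes the same two exclusions in the opposite order: Proposition~\ref{threefold} (which already has Theorem~\ref{KSC-P1} baked in) reduces the curve base to an elliptic curve, and then $q(X)=0$ kills that case.

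Second, your opening move ``if some connected finite \'etale cover of $X$ has positive irregularity, pass to that cover'' is unjustified as stated: an automorphism of $X$ need not lift to an arbitrary \'etale-in-codimension-one cover, even after iteration. What the paper uses is the specific cover $\widetilde X\cong Z\times A$ of \cite[Lemma~2.7]{HL21}, which comes \emph{equipped} with a lift $\widetilde f=\widetilde f_Z\times\widetilde f_A$; only then can Lemma~\ref{generically finite} transfer KSC. Relatedly, replacing a rational $\pi\colon X\dashrightarrow B$ with a ``smooth model'' on which the whole diagram is by morphisms and $f$ is still an automorphism requires the Chow reduction (Proposition~\ref{Chow-red}) and an $f$-equivariant resolution of the graph, not just an arbitrary smooth birational model of $B$; your proposal elides this.
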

Notice that  a birational self-map $f$ on a minimal Calabi-Yau threefold $X$ of Picard number $\rho(X)\ge2$ is primitive if the action $f^*|_{\NS_{\bQ}(X)}$ is  irreducible over $\bQ$ (cf. \cite[Corollary 1.3]{Oguiso18}).
This motivates us to ask the following question.
\begin{question}\label{Que}
Let $f$ be a birational self-map of a weak Calabi-Yau variety $X$ with $\rho(X)\ge2$.
Suppose that $f^*|_{\NS_\bQ(X)}$ is irreducible over $\bQ$.
Then is KSC true for $(X,f)$?
\end{question}
Motivated by \cite[Theorems 1.4]{CLO22} and without assuming that the Kodaira dimension vanishes,  we may establish that KSC holds for automorphisms of normal projective varieties  $X$ with $q(X)\ge\dim X-1$ as follows.
\begin{theorem}
\label{q(X)>X-1}	
KSC is true for automorphisms of normal projective varieties $X$ with $q(X)\ge\dim X-1$.
\end{theorem}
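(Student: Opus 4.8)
The plan is to reduce to known cases via the Albanese map and the structure theory already recalled in the excerpt. Let $f\colon X\to X$ be an automorphism of a normal projective variety $X$ of dimension $n$ with $q(X)\ge n-1$; by Remark \ref{non-dense} we may assume $d_1(f)>1$ and that there is a point $x$ with Zariski-dense orbit, so in particular $\kappa(X)\le 0$. After an $f$-equivariant resolution (\cite[Theorem 13.2]{BM97}) we may assume $X$ is smooth, since resolving does not change the Albanese, the dynamical degree, or the validity of KSC. The Albanese morphism $\alb_X\colon X\to A:=\Alb(X)$ is $f$-equivariant: $f$ descends to a translation-composed-with-group-automorphism $g$ of $A$, and $\alb_X\circ f=g\circ\alb_X$. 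The key dichotomy is whether $\alb_X$ is generically finite onto its image or not, governed by the relative dimension $n-q(X)\in\{0,1\}$.

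\textbf{Case 1: $q(X)=n$.} Then $\alb_X$ is generically finite onto its image $\alb_X(X)\subseteq A$, which is an $f$-invariant subvariety of the abelian variety $A$. Since KSC holds for all dominant self-maps of abelian varieties (\cite[Theorem 1.2]{Silverman17}), and more generally one expects it to pass to invariant subvarieties, the point is to transport this back to $X$. Concretely: a Zariski-dense $f$-orbit on $X$ maps to a Zariski-dense $g$-orbit on $\alb_X(X)$; because $\alb_X$ is generically finite, $d_1(f)=d_1(g|_{\alb_X(X)})$ (dynamical degrees are birational invariants and are unchanged under generically finite covers in the obvious way), and arithmetic degrees are comparable under the finite part of the map via standard height inequalities (\cite[Theorem 3.4]{LS21} applied to the generically finite $f$-equivariant morphism, possibly after Stein factorization and a further equivariant blow-up). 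So $\alpha_f(x)=\alpha_g(\alb_X(x))=d_1(g)=d_1(f)$.

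\textbf{Case 2: $q(X)=n-1$.} Now the generic fibre of $\alb_X$ is a curve, and $f$ preserves this fibration over $g\colon A'\to A'$ on an $f$-equivariant modification $A'$ of $\alb_X(X)$ (take the Stein factorization and resolve). The base $A'$ has dimension $n-1$ and maps generically finitely to an abelian variety, so KSC holds for $g$ by Case 1. By the fibration principle \cite[Theorem 3.4]{LS21}, KSC for $f$ follows once we check $d_1(f)=d_1(g)$; since the relative dimension is one, the relative dynamical degree $d_1(f\mid g)$ is either $0$ or — if $f$ acts on the generic fibre curve with positive entropy, impossible for a curve — so in fact $d_1(f)=\max\{d_1(g),\,d_0(g)\cdot d_1(f\mid g)\}$ and the relative first dynamical degree of a map of a curve over itself is $1$; hence $d_1(f)=d_1(g)$, and we are done. (When $\kappa(X)=-\infty$ and the fibre is rational, one can alternatively invoke the rationally-connected fibre case, but the product formula for dynamical degrees is the cleanest route.)

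\textbf{Main obstacle.} The delicate step is Case 1, transporting KSC from the invariant subvariety $Y=\alb_X(X)$ of the abelian variety back to $X$ across a generically finite, but not finite, $f$-equivariant morphism: one must handle the exceptional locus of $\alb_X$ so that a dense orbit in $X$ genuinely produces a dense orbit in $Y$ with comparable height growth, and must know KSC holds for the self-map $g|_Y$ of the (possibly singular) subvariety $Y\subsetneq A$ rather than for $A$ itself. Both points are addressed in the literature on KSC for abelian varieties and their invariant subvarieties (and are exactly the phenomena exploited in \cite[Theorems 1.4]{CLO22}); assembling them into the equivariant commutative diagram and verifying the height comparison is where the real work lies.
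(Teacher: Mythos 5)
Your proposal correctly identifies the two cases and the tools (Albanese, generically finite reduction, fibration principle with relative dimension one), but the ``main obstacle'' you flag in Case 1 is a genuine gap in your argument, and you leave it open rather than resolving it. You are trying to run KSC on the image $\alb_X(X)$, which a priori is only an $f$-invariant \emph{subvariety} of the abelian variety $A$, and Silverman's theorem (Theorem~\ref{Q-abelian}(1)) is stated for self-maps of abelian varieties, not of arbitrary invariant subvarieties. You say this is ``addressed in the literature'' and ``exactly the phenomena exploited in \cite{CLO22},'' but you do not produce the statement that closes the gap, and without it the argument does not go through.

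The paper avoids the obstacle entirely. Its first step is to invoke \cite[Proposition~5.1]{CLO22} to reduce to the situation where the Albanese morphism $\pi\colon X\to A$ is \emph{surjective onto $A$} with $\dim A=q(X)$. (The underlying reason is that once one has a point with Zariski-dense $f$-orbit, the image $\alb_X(X)\subseteq A$ must have Kodaira dimension $0$ — otherwise its Iitaka fibration would give a nontrivial invariant fibration over a base of general type, contradicting density — and a Kodaira-dimension-zero subvariety of an abelian variety is a translate of an abelian subvariety, which by the universal property of the Albanese must be all of $A$.) After this reduction there is no invariant proper subvariety of $A$ to worry about: when $\dim A=\dim X$ one applies Lemma~\ref{generically finite} and Theorem~\ref{Q-abelian} directly, and when $\dim A=\dim X-1$ one applies Theorem~\ref{Q-abelian} together with Proposition~\ref{x=y-1} (whose proof gives $d_1(f|_\pi)\le d_1(g)$ cleanly from $d_{\dim X}(f)=1$ and the product formula, rather than the slightly muddled entropy-of-a-curve argument you sketch). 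Note also that your Case 2 quietly inherits the same gap as Case 1, since you take $A'$ to be a modification of $\alb_X(X)$, again a possibly proper subvariety of $A$. The fix in both cases is the same missing ingredient: the surjectivity of the Albanese morphism.
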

\begin{remark}
Let $\pi: X\dasharrow Y$ be dominant rational map of projective varieties.
Let $f: X\dasharrow X$ and $g: Y\dasharrow Y$ be dominant rational self-maps such that $f\circ\pi=\pi\circ g$.
Suppose that $\pi$ is generically finite.
If one could establish that KSC holds for $f$ if and only if KSC holds for $g$, then KSC is true  for any birational automorphism of a normal projective variety $X$ with $q(X)\ge\dim X-1$ by using the same argument in the proof of Theorem \ref{q(X)>X-1}.
\end{remark} 
The paper is organized as follows.
In Section \ref{Preliminaries}, we  collect some basic facts on KSC.
In Section \ref{invariant}, we  study invariant fibrations on projective varieties and prove Theorems  \ref{q(X)>X-1} and \ref{KSC-P1}.
In Section  \ref{MainSect}, we prove Theorems \ref{reduced-auto}.
Finally, we  study  KSC on projective varieties with Picard number two in Section \ref{Pic=2}.

{\bf Acknowledgments.}
I would like to thank Yohsuke Matsuzawa for constant conversations to complete the original version of this article.
 I would  like to thank Fei Hu, Sheng Meng and Long Wang for useful comments, and Keiji Oguiso for answering questions, and Meng Chen and De-Qi Zhang for their encouragement.
 Finally, I am thankful to the referee for a careful reading of the article and the many suggestions.
\section{Preliminaries}\label{Preliminaries}
\begin{notation}
Let $X$ be a normal projective variety
Let $\NS_\bR(X):=\NS(X)\otimes_\bZ \bR$, where $\NS(X):=\Pic(X)/\Pic^0(X)$ is the usual N\'eron-Severi group of $X$.

Let $f: X\to X$  be a surjective endomorphism of $X$.
Denote by
$$
         \Alb(X):=\Pic^0(\Pic^0(X)_\mathrm{red})
$$
which is an abelian variety.
Then there is an albanese morphism $\alb_X: X\to \Alb(X)$ such that: $\alb_X(X)$ generates $\Alb(X)$ and for every morphism $\varphi: X \to A$ from $A$ to an abelian variety $A$, there exists a unique morphism $\psi: \Alb(X) \to A$ such that $\varphi=\psi\circ\alb_X$ (cf. \cite[Remark 9.5.25]{Fantechi05}).
By the above universal property, $f$ descends to surjective endomorphisms on $\Alb(X)$.
The irregularity $q(X)$ of $X$ is defined as:
$$ 
 q(X):= \dim \Alb(X).
$$

A surjective endomorphism $f: X\to X$  is said to be {\it int-amplified} if $f^*H-H=L$ for some ample Cartier divisors $H$ and $L$, or equivalently, if all eigenvalues of $f^*|_{\NS_{\bR}(X)}$ are of modulus greater than 1 (cf. \cite[Theorem 1.1]{Meng20}).

The Iitaka dimension   $\kappa(X,D)$ of a $\bR$-Cartier divisor $D$ on $X$ (cf. \cite{Nakayama04}) is the largest integer $k$ such that 
$$
             \limsup_{m\to\infty} \frac{h^0(X,\lfloor ml(D)D \rfloor )}{m^k}>0.
$$
otherwise, $\kappa(X,D)=-\infty$.
Here, $l(D)\in\{k\in\bZ_{>0}, |kD|\ne\emptyset\}$.
The Kodaira dimension of $X$ is $\kappa(X)=\kappa(X,K_X)$, where $K_X$ is the canonical divisor of $X$.
\end{notation}
The following lemma is well-known (cf. \cite[Lemma 5.6]{Matsuzawa20a}, \cite[Lemma 2.5]{MZ19}).
\begin{lemma}
\label{generically finite}
Let $\pi: X\dashrightarrow Y$ be a dominant rational map of projective varieties.
Let $f:X\rightarrow  X$ and $g:Y\rightarrow  Y$ be  surjective endomorphisms such that $g\circ\pi=\pi\circ f$.
Then the following hold.
\begin{itemize}
	\item[(1)] Suppose that $\pi$ is generically finite.
Then KSC holds for $f$ if and only if KSC holds for $g$.
	\item[(2)] Suppose $d_1(f)=d_1(g)$ and KSC holds for $g$.
Then KSC holds for $f$.
\end{itemize}
\end{lemma}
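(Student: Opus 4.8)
The plan is to deduce both statements from two elementary height comparisons, together with the a priori bounds $1\le\underline{\alpha}_h(P)\le\overline{\alpha}_h(P)\le d_1(h)$ of Remark \ref{non-dense} and the fact (already recalled) that $\alpha_h$ exists when $h$ is a morphism. I fix very ample divisors $H_X$ on $X$ and $H_Y$ on $Y$ with the associated Weil heights; all the invariants involved are independent of these choices and are unchanged if a point is replaced by a finite iterate. After an $f$-equivariant resolution when $f$ is an automorphism — and otherwise by the bookkeeping relative to $\operatorname{Ind}(\pi)$ carried out in \cite{MZ19,Matsuzawa20a} — I may assume $\pi$ is a morphism, so that $g^n\circ\pi=\pi\circ f^n$ holds everywhere on $X$. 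Embedding $Y$ by $|H_Y|$ and writing $\pi$ through sections of $H_X$, the standard estimate for heights under homogeneous maps gives constants $c_1,c_2>0$ with
\begin{equation}\label{eq:gfA}
h_{H_Y}^+(\pi(P))\le c_1\,h_{H_X}^+(P)+c_2\qquad\text{for all }P\in X(\overline{\mathbb Q}).
\end{equation}
If moreover $\pi$ is generically finite, then $\dim X=\dim Y$ and $\pi^*H_Y$ is big, say $\pi^*H_Y\sim_{\bQ}A+E$ with $A$ ample and $E\ge0$; combining functoriality of heights, the lower bound for heights of effective divisors off their support, and the boundedness below of heights of ample divisors, one obtains constants $c_1',c_2'>0$ with
\begin{equation}\label{eq:gfB}
h_{H_X}^+(P)\le c_1'\,h_{H_Y}^+(\pi(P))+c_2'\qquad\text{for all }P\in X(\overline{\mathbb Q})\setminus Z,\ \ Z:=\Supp(E).
\end{equation}

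For part (2), let $x$ have Zariski dense $f$-orbit. Since $\pi$ is a dominant morphism, $\mathcal O_g(\pi(x))=\pi(\mathcal O_f(x))$ is Zariski dense in $Y$; as $\alpha_g(\pi(x))$ exists, KSC for $g$ gives $\alpha_g(\pi(x))=d_1(g)$. Feeding $P=f^n(x)$ into \eqref{eq:gfA}, taking $n$th roots and passing to $\liminf$, one gets $d_1(g)=\underline{\alpha}_g(\pi(x))\le\underline{\alpha}_f(x)$; combined with $\overline{\alpha}_f(x)\le d_1(f)=d_1(g)$ this forces $\alpha_f(x)=d_1(f)$, i.e.\ KSC for $f$ at $x$.

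For part (1), the implication ``KSC for $g$ $\Rightarrow$ KSC for $f$'' is part (2), once one notes $d_1(f)=d_1(g)$: this is the invariance of dynamical degrees under generically finite semiconjugacy (projection formula; cf.\ \cite{Truong20}). For the converse, assume KSC for $f$, let $y$ have Zariski dense $g$-orbit, and choose any $x\in\pi^{-1}(y)$. Then $\pi(f^n(x))=g^n(y)$, so the closed set $\pi(\overline{\mathcal O_f(x)})$ contains the dense set $\mathcal O_g(y)$ and therefore equals $Y$; as $\pi$ is generically finite this forces $\dim\overline{\mathcal O_f(x)}=\dim Y=\dim X$, so $\mathcal O_f(x)$ is Zariski dense. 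By KSC for $f$, $\alpha_f(x)=d_1(f)=d_1(g)>1$, so $h_{H_X}^+(f^n(x))\to\infty$ with $h_{H_X}^+(f^n(x))^{1/n}\to d_1(g)$. Since $\mathcal O_f(x)$ is not eventually contained in the proper closed set $Z$, there are infinitely many $n$ with $f^n(x)\notin Z$, and for those \eqref{eq:gfB} gives $h_{H_Y}^+(g^n(y))\ge\bigl(h_{H_X}^+(f^n(x))-c_2'\bigr)/c_1'$; letting $n\to\infty$ along this subsequence yields $\overline{\alpha}_g(y)\ge d_1(g)$. Since $\alpha_g(y)$ exists and is at most $d_1(g)$, we conclude $\alpha_g(y)=d_1(g)$, i.e.\ KSC for $g$ at $y$.

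The step I expect to be the main nuisance is the reduction, for a genuine dominant rational map $\pi$, to the case of a morphism — equivalently, verifying that the relevant forward orbits avoid $\operatorname{Ind}(\pi)$ so that $g^n\circ\pi=\pi\circ f^n$ may be used along them; this is routine but must be done carefully, following \cite{MZ19,Matsuzawa20a}. The only genuinely non-formal point is the reverse comparison \eqref{eq:gfB} together with the observation that a Zariski dense orbit meets $X\setminus Z$ along a subsequence on which $h_{H_X}^+(f^n(x))^{1/n}$ still converges to $d_1(f)$; this is what drives the converse in part (1).
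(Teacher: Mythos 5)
First, note that the paper does not actually prove this lemma: it is recorded as ``well-known'' with pointers to \cite[Lemma 5.6]{Matsuzawa20a} and \cite[Lemma 2.5]{MZ19}, so there is no in-paper argument to compare against. Your write-up is essentially the standard proof from those sources: the forward height comparison $h^+_{H_Y}\circ\pi\le c_1h^+_{H_X}+c_2$, the reverse comparison $h^+_{H_X}\le c_1'\,h^+_{H_Y}\circ\pi+c_2'$ off $\Supp(E)$ obtained from bigness of $\pi^*H_Y$ in the generically finite case, the bound $\overline{\alpha}\le d_1$, the existence of arithmetic degrees for morphisms, the invariance $d_1(f)=d_1(g)$ under generically finite semiconjugacy, and the fact that a Zariski dense orbit meets the complement of any proper closed subset infinitely often. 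The converse direction of (1), including the lift of a dense $g$-orbit to a dense $f$-orbit through a generically finite $\pi$, is handled correctly.

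The one place you are too quick is the reduction to the case that $\pi$ is a morphism. For an automorphism $f$ an equivariant resolution is available, but for a general surjective endomorphism it is not, and one cannot simply ``assume $\pi$ is a morphism'': after passing to the graph, $f$ lifts only to a dominant rational self-map, for which arithmetic degrees need not exist as limits. Fortunately no such reduction is needed, because the subsequence device you already deploy in part (1) also closes part (2) for rational $\pi$: both height comparisons hold off a proper closed subset $B\supseteq\operatorname{Ind}(\pi)$, and since $f$ and $g$ are morphisms both $\alpha_f(x)$ and $\alpha_g(\pi(x))$ exist as genuine limits, hence are computed along the infinite set of $n$ with $f^n(x)\notin B$; this gives $\alpha_f(x)\ge\alpha_g(\pi(x))=d_1(g)=d_1(f)$ directly. (One should also set aside the trivial case $d_1(f)=d_1(g)=1$ at the outset, and replace $x$ by an iterate lying outside $\operatorname{Ind}(\pi)$ before evaluating $\pi$.) With these points made explicit the argument is complete and agrees with the proofs in the cited references.
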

A normal projective variety $X$ is said to be {\it $Q$-abelian} if there is a finite surjective morphism $\pi: A\to X$ \'etale in codimension 1 with $A$ being an abelian variety.
\begin{theorem}\label{Q-abelian}
\begin{itemize}
	\item[(1)] KSC holds for any surjective endomorphism of an abelian variety.
	\item[(2)] KSC holds for any surjective endomorphism of a $Q$-abelian variety.\end{itemize}
\end{theorem}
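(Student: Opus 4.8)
The plan is to reduce both assertions to the case of abelian varieties. For assertion (1) I would simply quote \cite[Theorem 1.2]{Silverman17}, which proves KSC for every dominant self-map of an abelian variety, hence in particular for every surjective endomorphism; the mechanism is that such an $f$ is a translated isogeny $f=T_c\circ h$, so, translations acting trivially on $\NS(A)$, $d_1(f)$ equals the spectral radius of $h^*$ on $\NS_\bR(A)$, and writing a Weil height attached to an ample symmetric divisor as its canonical quadratic part plus a bounded term one deduces from the theory of canonical heights, together with the N\'eron--Severi and Mordell--Weil structure, that $\alpha_f(P)$ exists, is at most that spectral radius, and attains it once $\OO_f(P)$ is Zariski dense. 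I would not reproduce this height computation.

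For assertion (2), let $X$ be $Q$-abelian with $\pi\colon A\to X$ a finite surjective morphism, \'etale in codimension one, from an abelian variety $A$, and let $f$ be a surjective endomorphism of $X$. First I would lift a power of $f$ to $A$: by a standard finiteness argument (over a field of characteristic zero a normal projective variety has only finitely many finite covers of bounded degree that are \'etale in codimension one, and pulling such a cover back along the finite surjective map $f$ again produces one of the same degree, so $f$ permutes this finite set), some iterate $f^k$ fixes the cover $\pi$, which yields a surjective endomorphism $\tilde f$ of $A$ with $\pi\circ\tilde f=f^k\circ\pi$; this lifting for $Q$-abelian varieties is standard (cf. \cite{NZ10,HL21}). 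Since $\pi$ is finite, in particular generically finite, Lemma \ref{generically finite}(1) applied to $\pi\circ\tilde f=f^k\circ\pi$ shows that KSC holds for $(X,f^k)$ if and only if it holds for $(A,\tilde f)$, and the latter holds by assertion (1). Finally, KSC is insensitive to replacing $f$ by a positive iterate: one has $d_1(f^k)=d_1(f)^k$, the arithmetic degree scales by a $k$-th root, and if $\OO_f(x)$ is Zariski dense then $\OO_{f^k}(f^i(x))$ is Zariski dense for some $0\le i<k$ (as $X$ is irreducible); hence KSC for $(X,f^k)$ yields KSC for $(X,f)$.

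All the content sits in assertion (1), i.e. in Silverman's height-theoretic analysis on abelian varieties, which I treat as a black box; the steps in (2) are formal. The only place where anything could go wrong is the lifting step, and the difficulty there is merely to invoke correctly the finiteness of \'etale-in-codimension-one covers and the standard structure theory of $Q$-abelian varieties. I therefore do not expect a genuine obstacle.
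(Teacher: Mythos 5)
Your proposal is correct and matches the paper's approach: the paper disposes of the theorem by citing \cite[Theorem 2]{Silverman17} for (1) and \cite[Theorem 2.8]{MZ19} for (2), and the latter carries out precisely the reduction to the abelian case via an equivariant lift of the quasi-\'etale cover that you sketch. One minor caveat in your lifting step: the finiteness of degree-$d$ quasi-\'etale covers (from topological finite generation of $\pi_1^{\mathrm{alg}}(X_{\mathrm{reg}})$) is fine, but showing that $f^*$ genuinely permutes this finite set requires handling the fact that the fibre-product pullback of a connected normal cover along the finite non-\'etale map $f$ may become disconnected or non-normal, which is exactly the extra care the cited references supply.
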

\begin{proof}
(1) follows from \cite[Theorem 2]{Silverman17}, and (2) follows from \cite[Theorem 2.8]{MZ19}.
\end{proof}
The following theorem is due to Matsuzawa, Meng, Shibata, Zhang and Zhong \cite{MMSZZ22}.
\begin{theorem}\label{eigenvector}
\cite[Theorem 1.9(1)]{MMSZZ22}
Let $f: X\to X$ be a surjective endomorphism of a normal projective variety over a field $k$ of arbitrary characteristic.
Assume $d_1(f)>1$.
Then $f^*D\sim_\bR d_1(f)D$ for some nef $\bR$-Cartier divisor $D$.	
\end{theorem}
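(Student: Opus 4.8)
\textbf{Proof proposal for Theorem \ref{q(X)>X-1}.}

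The plan is to reduce to known cases by using the albanese morphism $\alb_X: X\to \Alb(X)$ together with Lemma \ref{generically finite}. By Remark \ref{non-dense} we may assume $d_1(f)>1$, and in particular (by the same remark, via \cite[Theorem A]{NZ09}) that $\kappa(X)\le 0$, so that the MMP/structure results on irregular varieties of non-positive Kodaira dimension are available. Since $f$ is an automorphism, it descends to a surjective endomorphism $g$ of $A:=\Alb(X)$ with $\alb_X\circ f=g\circ\alb_X$. First I would treat the easy case $q(X)=\dim X$: then $\alb_X$ is generically finite (indeed a finite morphism, as $\dim\alb_X(X)=q(X)=\dim X$ forces $\alb_X$ onto $A$ with $X$ birational to $A$ over the image), so Lemma \ref{generically finite}(1) reduces KSC for $(X,f)$ to KSC for the endomorphism $g$ of the abelian variety $A$, which holds by Theorem \ref{Q-abelian}(1) (equivalently Silverman's theorem). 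Strictly, if $\alb_X$ is only generically finite we first take an $f$-equivariant resolution (\cite[Theorem 13.2]{BM97}) and apply Lemma \ref{generically finite}(1) twice.

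The substantive case is $q(X)=\dim X-1$. Here $Y:=\alb_X(X)\subseteq A$ is a subvariety of dimension $n-1$ (if $\dim Y<n-1$, work over the Stein factorization / the abelian image and one lands in a lower-dimensional abelian variety, a case already handled or handled by induction), and $\alb_X: X\to Y$ is a fibration whose general fiber $F$ is a curve, with $g$ restricting to an automorphism $g_Y$ of $Y$. The key dichotomy is on the general fiber: either $F$ has genus $\ge 1$, or $F\cong\bP^1$. If the general fiber has genus $\ge 1$, then by Remark \ref{non-dense} no orbit is Zariski dense (one can see this because such $X$ is not uniruled, so $\kappa(X)\ge 0$; combined with $\kappa(X)\le 0$ we get $\kappa(X)=0$ and one invokes \cite[Theorem A]{NZ09}, or more directly the subvariety $Y$ of an abelian variety together with positivity of the relative canonical sheaf kills density), so KSC is vacuously true. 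If the general fiber is $\bP^1$, then $X$ is birationally a conic bundle (a $\bP^1$-fibration) over $Y$; after an $f$-equivariant resolution and running an $f$-equivariant relative MMP over the abelian variety $A$ — which exists here because the base is irregular and the relevant contractions are $f$-equivariant by uniqueness — I expect to arrive at an $f$-equivariant $\bP^1$-bundle (or the projection $X\to Y$ itself has $d_1(f)=d_1(g_Y)$) $\varphi: X'\to Y'$ with $Y'$ finite over an abelian variety, i.e.\ $Q$-abelian-like. Then I would compare dynamical degrees: for a $\bP^1$-fibration, $d_1(f)=\max\{d_1(g_{Y'}),\,(\text{fiberwise degree})\}$, and since $f$ is an automorphism the fiberwise map is an automorphism of $\bP^1$ of degree $1$, giving $d_1(f)=d_1(g_{Y'})$; now Lemma \ref{generically finite}(2) together with KSC for $g_{Y'}$ (which follows from Theorem \ref{Q-abelian}, after dealing with $Y'$ by the $q=\dim$ case applied in dimension $n-1$, or directly if $Y'$ is itself $Q$-abelian) finishes the proof.

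The main obstacle I anticipate is exactly the $\bP^1$-fibration case: making the $f$-equivariant relative MMP over the abelian variety work for an \emph{automorphism} (rather than an int-amplified or polarized endomorphism) is delicate, since in general there is no $f$-equivariant MMP for automorphism groups (as the paper itself emphasizes, \cite[Remark 1.3(1)]{HL21}); one must exploit that the base is an abelian variety — so every contracted extremal ray is ``vertical'' over $A$ and the contraction is canonical, hence automatically $f$-equivariant after replacing $f$ by an iterate. A secondary technical point is handling the locus where $\alb_X$ has non-$\bP^1$ fibers or is not equidimensional, and ensuring the dynamical-degree identity $d_1(f)=d_1(g_{Y'})$ holds after all these birational modifications; for this I would use the birational invariance of dynamical degrees under $f$-equivariant birational maps together with the product/pushforward formula for dynamical degrees of fibrations (e.g.\ the Dinh–Nguyen type inequalities). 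Once these are in place, the conclusion is assembled from Theorems \ref{Q-abelian}, \ref{eigenvector}, and Lemma \ref{generically finite}.
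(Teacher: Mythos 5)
Your proposal does not address the statement at hand. The statement to be proved is Theorem \ref{eigenvector}: for a surjective endomorphism $f$ of a normal projective variety over a field of arbitrary characteristic with $d_1(f)>1$, there exists a nef $\bR$-Cartier divisor $D$ with $f^*D\sim_\bR d_1(f)D$. What you have written is instead a proof sketch of Theorem \ref{q(X)>X-1} (KSC for automorphisms of varieties with $q(X)\ge\dim X-1$). Nothing in your argument produces a nef eigendivisor; a proof of Theorem \ref{eigenvector} would run along entirely different lines: one applies a Perron--Frobenius-type argument to the action of $f^*$ on $\NS_\bR(X)$, which preserves the closed, salient, full-dimensional nef cone, to obtain a nef class $D$ with $f^*D\equiv d_1(f)D$, and then one must upgrade numerical equivalence to $\bR$-linear equivalence (this is the delicate part in arbitrary characteristic, handled in \cite[Theorem 1.9(1)]{MMSZZ22} via the structure of $\Pic^0$ and the kernel of $\Pic_\bR(X)\to\NS_\bR(X)$). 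Note also that the paper itself does not reprove this result; it simply cites \cite{MMSZZ22}.

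As a secondary remark: even judged as a proof of Theorem \ref{q(X)>X-1}, your argument is substantially more involved than necessary and leans on a step that is known to be problematic. The paper's proof never runs any MMP: after reducing (via \cite[Proposition 5.1]{CLO22}) to the case where $\alb_X:X\to A$ is surjective with $\dim A=q(X)$, the case $\dim A=\dim X$ follows from Lemma \ref{generically finite}(1) and Silverman's theorem, and the case $\dim A=\dim X-1$ follows immediately from Proposition \ref{x=y-1}, because for an automorphism fibred over a base of dimension $\dim X-1$ the product formula forces $d_1(f|_\pi)=1\le d_1(g)$, so no dichotomy on the general fibre and no $f$-equivariant relative MMP (which, as the paper stresses via \cite[Remark 1.3(1)]{HL21}, is not available for automorphisms in general) is needed.
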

\begin{definition}
We say such $D$ in Theorem \ref{eigenvector} is a \emph{leading real eigendivisor} of $(X,f)$.
Note that KSC holds for $(X,f)$ if such $D$ has $\kappa(X,D)>0$ as follows.
\end{definition}
\begin{proposition}
\label{lead-eigen}
Let $X$ be a $\bQ$-factorial normal projective variety and $f \colon X \longrightarrow X$ a surjective morphism with $d_1(f)>1$.
\begin{enumerate}
\item If the lead real eigendivisor $D$ has $\kappa(X,D)>0$, then KSC holds for $f$.
\item Assume that $f\in \Aut(X)$.
Let $\bR$-divisors $D_{+}$  and $D_{-}$ respectively be the lead real eigendivisor of  $(X,f)$  and $(X,f^{-1})$ respectively.
If  $\kappa(X,D_{+}+D_{-})>0$, then KSC holds for $f$.
\end{enumerate}
\end{proposition}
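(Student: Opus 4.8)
The plan is to exploit the general principle that points whose Weil height grows like $d_1(f)^n$ realize the conjectural arithmetic degree, and that a divisor $D$ with $f^*D \sim_\bR d_1(f) D$ and positive Iitaka dimension produces a nontrivial $f$-equivariant fibration onto a lower-dimensional base along which one can compare heights. Concretely, for part (1): since $\kappa(X,D) > 0$, the Iitaka fibration associated to $D$ gives a dominant rational map $\varphi \colon X \dashrightarrow Y$ with $\dim Y \ge 1$, and because $f^*D \sim_\bR d_1(f) D$, after replacing $f$ by an iterate the linear systems $|mD|$ are permuted in a way that makes $\varphi$ $f$-equivariant: there is a dominant rational self-map $g \colon Y \dashrightarrow Y$ with $g \circ \varphi = \varphi \circ f$. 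One then shows $d_1(f) = d_1(g)$: the pullback of an ample divisor on $Y$ is, up to the Iitaka-fibration construction, proportional to $D$ (or dominated by a multiple of $D$ together with a vertical correction), so the growth of $(f^s)^*$ on that class is governed by $d_1(f)$, forcing $d_1(g) \ge d_1(f)$; the reverse inequality is automatic since $g$ is a factor of $f$. Then Lemma \ref{generically finite}(2) reduces KSC for $f$ to KSC for $g$, which holds because $\dim Y < \dim X$ and one may invoke induction on dimension — or, more cleanly, because any point with Zariski-dense $f$-orbit maps to a point with Zariski-dense $g$-orbit and the standard height comparison (the fibers being contracted by $D$) pins $\alpha_f(x) = \alpha_g(\varphi(x)) = d_1(g) = d_1(f)$. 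Here one must handle the subtlety that $\varphi$ is only rational: resolve the indeterminacy $f$-equivariantly (possible after iteration by \cite[Theorem 13.2]{BM97} in the automorphism case, and in general by the standard normalization-of-the-graph argument), so that without loss of generality $\varphi$ is a morphism.

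For part (2), where $f \in \Aut(X)$: apply Theorem \ref{eigenvector} to both $f$ and $f^{-1}$ to obtain nef $\bR$-divisors $D_+$ with $f^* D_+ \sim_\bR d_1(f) D_+$ and $D_-$ with $(f^{-1})^* D_- \sim_\bR d_1(f^{-1}) D_-$; note $d_1(f^{-1}) = d_1(f) =: \lambda > 1$ for an automorphism. The key observation is that $D := D_+ + D_-$ is $f^*$-nef-invariant in a balanced way: $f^* D_+ \sim_\bR \lambda D_+$ while $f^* D_- \sim_\bR \lambda^{-1} D_-$ (since $D_-$ is the $\lambda$-eigendivisor of $f^{-1}$, it is the $\lambda^{-1}$-eigendivisor of $f$). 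If $\kappa(X, D_+ + D_-) > 0$, then — applying an argument parallel to part (1) — the associated Iitaka fibration $\varphi \colon X \to Y$ is again $f$-equivariant (the $\bR$-linear system of $m(D_+ + D_-)$ is preserved up to the scaling, because both summands are eigendivisors), and along this fibration heights of $f$-orbits are controlled: the restriction of $D_\pm$ to a general fiber is numerically trivial, so on fibers $f$ has first dynamical degree $1$ and KSC is trivially true there, while on the base $Y$ one has $\dim Y < \dim X$ and can conclude by induction on dimension together with Lemma \ref{generically finite}(2), after checking $d_1(f) = d_1(g)$ as before. One should also separately dispose of the case $\dim Y = \dim X$ (i.e. $D_+ + D_-$ big): then the orbit-height grows at the maximal rate and $\alpha_f(x) = \lambda = d_1(f)$ follows directly from the height inequality $h_D(f^n x) \asymp \lambda^n h_D(x) + O(\lambda^{-n})$ for $x$ with dense orbit.

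The main obstacle I expect is establishing the $f$-equivariance of the Iitaka fibration with the correct matching of dynamical degrees, i.e. proving $d_1(f) = d_1(g)$ in both parts. The difficulty is that after passing to the Iitaka fibration one only directly controls the growth of $(f^s)^*$ on the class of $D$ (or $D_+ + D_-$), whereas $d_1(g)$ is measured against an ample divisor on $Y$; one needs that $\varphi^*(\text{ample on } Y)$ and $D$ (modulo divisors vertical over $Y$) generate the same "growth direction," which requires a careful analysis of the relation between the eigendivisor and the semiample model. In part (2) there is the extra wrinkle that $D_+ + D_-$ is a sum of eigendivisors for \emph{different} eigenvalues $\lambda$ and $\lambda^{-1}$, so the linear system $|m(D_+ + D_-)|$ is not literally $f$-semi-invariant unless one is careful — the resolution is that the $\bR$-\emph{Iitaka class} is preserved because $f^*(D_+ + D_-) \sim_\bR \lambda D_+ + \lambda^{-1} D_-$ lies in the cone spanned by $D_+$ and $D_-$, and the Iitaka fibration depends only on this 2-plane. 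Once equivariance and the degree equality are in hand, the remainder is a routine application of Lemma \ref{generically finite}, induction on dimension, and Remark \ref{non-dense} to handle base points with non-dense orbit.
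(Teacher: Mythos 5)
Your proposal takes a fundamentally different route from the one the paper relies on, and it has a circular step that does not close. The paper's proof is simply a citation: part (1) is exactly \cite[Proposition 5.2(1)]{LM21} (quoted in the Historical Note), and part (2) is its automorphism variant. The argument there is a canonical-height argument, not a fibration descent: one constructs $\hat{h}_D(x)=\lim_n h_D(f^n(x))/d_1(f)^n$ directly from the functional equation $f^*D\sim_\bR d_1(f)D$ and nefness of $D$, then uses $\kappa(X,D)>0$ to produce enough sections of (roundings of) $mD$ to invoke Northcott's theorem on the Iitaka image, forcing $h_D(f^n(x))$ to be unbounded along any Zariski-dense orbit, hence $\hat{h}_D(x)>0$, hence $\alpha_f(x)\geq d_1(f)$; the reverse inequality is Remark \ref{non-dense}. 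No descent to the base, no appeal to KSC in lower dimension.

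Your approach instead tries to produce an $f$-equivariant Iitaka fibration $\varphi\colon X\dashrightarrow Y$ and conclude by Lemma \ref{generically finite}(2). There are two genuine gaps here. First, you invoke ``KSC holds for $g$'' on $Y$ with $\dim Y<\dim X$ ``by induction on dimension,'' but KSC in lower dimension is not a theorem you may assume — the proposition is not stated as an induction, and KSC on, say, a general threefold base is precisely the open conjecture. Second, and more importantly, the equality $d_1(f)=d_1(g)$ that Lemma \ref{generically finite}(2) requires is asserted but not established; from $mD\sim\varphi^*A+E$ with $E\geq 0$ one only gets $d_1(g)\leq d_1(f)$, and nothing you write controls $d_1(f|_\varphi)$ from above, which is what the product formula would need. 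Your alternative ``$\alpha_f(x)=\alpha_g(\varphi(x))$'' is also not valid for a fibration that is not generically finite. There is additionally the subtlety that for an $\bR$-divisor eigendivisor with irrational eigenvalue, $|\lfloor mD\rfloor|$ is not literally preserved by $f^*$, so even the equivariance of the Iitaka map needs a Chow-reduction-type argument rather than the permutation of linear systems you describe. In short, the descent strategy, as written, both begs the question in lower dimension and leaves the key degree equality unproved; the cited proof avoids all of this by working with heights on $X$ itself.
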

\begin{proof}
It follows from Theorem \ref{eigenvector} and \cite[Proposition 5.2]{LM21}.
\end{proof}
\begin{definition}
Let $f$ be an automorphism of  a projective variety $X$ with $\dim X\ge3$ and let   $D_+$ and  $D_-$ be some $\bR$-Cartier divisors on $X$.
We say $(X,D_+,D_-)$ is {\it positive on  $(X,f)$} if the following hold:
 $$f^{*}D_{+} \sim_{\bR} d_1(f)D_{+}, (f^{-1})^{*}D_{-} \sim_{\bR} d_1(f^{-1})D_{-}, \kappa(X,D_{+}+D_{-})>0.$$
\end{definition}
\begin{remark}
Lesieutre and Satriano established in \cite{LS21} that KSC is true for hyper-K\"ahler varieties by using the pair $(X,D_+,D_-)$ which is positive on $(X,f)$ when $\kappa(X,D_++D_-)=\dim X$.
So an interesting question is asked as follows.
\end{remark}
\begin{question}
Does there exist an example of a projective variety $X$  with the pair  $(X,D_+,D_-)$ which is positive on  $(X,f)$ such that $\kappa(X,D_++D_-)<\dim X$?
\end{question}
\begin{theorem}
(cf.  \cite[Theorem 1.8]{LS21})
Let $f$ be an automorphism of a normal non-uniruled projective threefold $X$.
If the second Chern class $c_2(X)$ is strictly positive on $\Nef(X)$ and $q(X)=0$, then $f$ has finite order.
\end{theorem}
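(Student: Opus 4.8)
The plan is to show first that $f^{*}$ has finite order on $\NS_\bR(X)$ --- this is where the hypothesis on $c_2$ is used --- and then that a suitable power of $f$ lies in $\Aut_0(X)$, which will turn out to be trivial under the remaining hypotheses.

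\emph{Step 1: $f^{*}$ has finite order on $\NS_\bR(X)$.} Since $f$ is an automorphism it preserves the Chern classes of $X$, and because its topological degree is $1$, the linear functional $\ell\colon\NS_\bR(X)\to\bR$, $\ell(D)=c_2(X)\cdot D$, is $f^{*}$-invariant; by hypothesis $\ell$ is strictly positive on $\Nef(X)\setminus\{0\}$. Put $K:=\Nef(X)\cap\ell^{-1}(1)$. Strict positivity forces $K$ to be bounded --- a sequence in $K$ of unbounded norm would, after rescaling, accumulate at a nonzero nef class lying in $\ell^{-1}(0)$ --- so $K$ is a compact convex body which spans $\NS_\bR(X)$ (as $\Nef(X)$ is full-dimensional), and $f^{*}K=K$ since $f^{*}$ preserves both $\Nef(X)$ and $\ell$. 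The stabiliser of $K$ in $\GL\big(\NS_\bR(X)\big)$ is closed and bounded, hence compact; since $f^{*}$ also preserves the N\'eron--Severi lattice, the subgroup $\{\,(f^{n})^{*} : n\in\bZ\,\}$ lies in the intersection of this compact group with a discrete subgroup of $\GL\big(\NS_\bR(X)\big)$, so it is finite. Hence $(f^{N})^{*}=\id$ on $\NS_\bR(X)$ for some $N\ge 1$.

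\emph{Step 2: a power of $f$ is trivial.} Fix an ample class $H$. By Step 1, $f^{N}$ preserves the numerical class of $H$, hence lies in the subgroup $\Aut_{[H]}(X)\subseteq\Aut(X)$ of automorphisms preserving $[H]$; it is well known that $\Aut_0(X)$ has finite index in $\Aut_{[H]}(X)$ (polarisation-preserving automorphisms form a bounded, hence finite-type, family), so $f^{NM}\in\Aut_0(X)$ for some $M\ge 1$. It remains to see that $\Aut_0(X)=\{1\}$: a nontrivial action of $\mathbb{G}_a$ or $\mathbb{G}_m$ on $X$ would sweep $X$ out by rational curves, so the non-uniruledness of $X$ forces the maximal connected affine subgroup of $\Aut_0(X)$ to be trivial, whence $\Aut_0(X)$ is an abelian variety; by the Nishi--Matsumura theorem this abelian variety acts faithfully on $\Alb(X)$ by translations and so embeds into $\Alb(X)$, which vanishes because $q(X)=0$. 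Therefore $f^{NM}=\id$, i.e.\ $f$ has finite order.

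The dynamical-linear-algebra core above is short; the part I expect to demand real care --- and hence the main obstacle --- is that $X$ is only assumed normal with klt singularities, so one must first give a precise meaning to $c_2(X)$ and to its $f^{*}$-invariance (for instance through the orbifold second Chern class, or by passing to an $f$-equivariant resolution and keeping track of the exceptional correction terms), and one should likewise check that the finiteness of the number of components of the polarisation-preserving automorphism group and the Nishi--Matsumura theorem remain valid in this singular setting; all of this is expected to go through but is not purely formal.
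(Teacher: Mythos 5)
Your argument is correct and follows the same overall architecture as the paper (show some iterate of $f^*$ fixes an ample class, invoke Fujiki--Lieberman, then kill $\Aut^0(X)$), but each step is implemented with different tools, and Step~2 genuinely diverges. In Step~1 the paper cites \cite[Lemma 7.1]{BGRS17} for compactness of $\{D\in\Nef(X): c_2(X)\cdot D\le m\}$, observes that the function $D\mapsto c_2(X)\cdot D$ attains a positive minimum on $\N^1(X)\cap\Amp(X)$ at finitely many lattice classes, and sums these to produce a single $f^*$-fixed ample class; your version instead shows the cyclic group generated by $f^*$ lies in the (compact) stabiliser of the slice $\Nef(X)\cap\ell^{-1}(1)$ and is discrete via lattice-preservation, hence finite --- this gives the slightly stronger conclusion that $f^*$ is of finite order on all of $\NS_\bR(X)$, and avoids needing to pin down specific minimising classes. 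In Step~2 the paper passes to a smooth model $X'$ and invokes Hanamura's structure theorem \cite[Theorem 2.1]{Han88} that $\Bir(X')$ is a disjoint union of abelian varieties of dimension $q(X')=0$; you instead argue directly on $X$ via the Chevalley decomposition of $\Aut_0(X)$, using non-uniruledness to kill the affine part and Nishi--Matsumura to embed the abelian-variety part into $\Alb(X)=0$. Your route avoids passing to a resolution but requires the Chevalley and Nishi--Matsumura apparatus in the normal (possibly singular) setting, which is precisely the caveat you flag at the end; the paper's route offloads that worry to Hanamura's theorem on a smooth model. Both approaches are sound.
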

\begin{proof}
By \cite[Lemma 7.1]{BGRS17} we know that $\{D\in\Nef(X)~ |~ c_2(X)\cdot D\le m\}$ is compact for all $m\ge0$.
So the function $D\mapsto c_2(X)\cdot D$ achieves a minimum positive value on $\N^1(X)\cap\Amp(X)$ and this value is achieved by only finitely many $D_i$.
Taking the sum of these finitely many $D_i$, we obtain an ample class $A$  that is fixed by $f^*$.
Then by a Fujiki-Liberman type theorem (cf. \cite{Fujiki78,Lieberman78} or \cite[Theorem 1.4]{Li20}), some iterate $f^n$ lies in the connected component of the identity $\Aut^0(X)\subseteq\Aut(X)$.
For a smooth model $X'$ of $X$, the birational automorphism group $\Bir(X')$ contains $\Aut^0(X)$ as a subgroup.
By \cite[Theorem 2.1]{Han88}, $\Bir(X')$ is a disjoint union of abelian varieties of dimension equal to $q(X')=q(X)=0$.
We conclude that $f$ has finite order.
\end{proof}
\section{Invariant fibrations on projective varieties}\label{invariant}
Invariant fibrations play an important role in the study of rational maps in higher dimension, and the product formula of Dinh-Nguy{\^e}n-Truong \cite{DNT12} is useful in dealing with their dynamical degrees.
Let $\pi: X\dashrightarrow Y$ be a dominant rational map of projective varieties and $f: X\dashrightarrow X$ a dominant rational self-map.
Fix an ample divisor $H$ on $X$ and an ample divisor $H'$ on $Y$.
Now we give the following definition.
\begin{definition}
\label{relative dg}
{\it The first dynamical degree of $f$ relative to $\pi$} is defined by
\begin{equation*}
            d_1(f|_\pi)=\lim_{n\to\infty}((f^n)^*H\cdot\pi^*((H')^{\dim Y})\cdot H^{\dim X-\dim Y-1})^{1/n}.
\end{equation*}
\end{definition}
The following is due to the product formula (cf. \cite[Theorem 1.1]{DN11}).
 \begin{proposition}
(cf. \cite[Theorem 3.4]{LS21})
\label{x=y-1}
Let $\pi: X\dasharrow Y$ be a dominant rational map of normal projective varieties.
Suppose $f$ (resp. $g$) is a surjective endomorphism of $X$ (or $Y$) such that $g\circ\pi=\pi\circ f$.
If $d_1(f|_\pi)\le d_1(g)$ and KSC holds for $g$, then KSC also holds for $f$.
The condition $d_1(f|_\pi)\le d_1(g)$ holds in particular if $f$ is an automorphism and $\dim Y=\dim X-1$.
\end{proposition}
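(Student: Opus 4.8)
The plan is to reduce everything to the case where $\pi$ is a morphism and then to push the problem down to $Y$. First, by Remark \ref{non-dense} we may assume $d_1(f)>1$. Replacing $X$ by the normalization $\widetilde\Gamma$ of the closure $\Gamma$ of the graph of $\pi$ inside $X\times Y$, and $f$ by the endomorphism induced by $f\times g$ (which is finite, hence restricts to a surjective endomorphism of the $(f\times g)$-invariant subvariety $\Gamma$, and then lifts to $\widetilde\Gamma$), we obtain a commutative square in which $\widetilde\Gamma\to X$ is birational and $\pi$ is replaced by a genuine surjective morphism $q\colon\widetilde\Gamma\to Y$; by Lemma \ref{generically finite}(1) this changes neither the validity of KSC for $f$ nor the dynamical degrees $d_1(f)$ and $d_1(f|_\pi)$. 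If $\dim Y=0$ then $d_1(f|_\pi)=d_1(f)$ and the hypothesis would force $d_1(f)=1$, a contradiction; if $\dim Y=\dim X$ then $\pi$ is generically finite and the conclusion is immediate from Lemma \ref{generically finite}(1) and KSC for $g$. So assume $1\le\dim Y\le\dim X-1$. Then the product formula for relative dynamical degrees \cite{DN11} gives $d_1(f)=\max\{d_1(g),\,d_1(f|_\pi)\}$, and the hypothesis $d_1(f|_\pi)\le d_1(g)$ yields $d_1(f)=d_1(g)$; in particular $d_1(g)>1$.

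Now let $x\in X(\overline{\mathbb Q})_f$ have Zariski dense $f$-orbit and assume $\alpha_f(x)$ exists. Since $\pi$ is a dominant morphism with $g\circ\pi=\pi\circ f$, we have $\mathcal O_g(\pi(x))=\pi(\mathcal O_f(x))$, whose Zariski closure contains $\pi\big(\overline{\mathcal O_f(x)}\big)=\pi(X)=Y$; hence the $g$-orbit of $\pi(x)$ is Zariski dense in $Y$. Because $g$ is a morphism, $\alpha_g(\pi(x))$ exists, and KSC for $g$ gives $\alpha_g(\pi(x))=d_1(g)$. Fix ample divisors $H$ on $X$ and $H'$ on $Y$; since $\pi^*H'$ is nef, $CH-\pi^*H'$ is ample for some $C>0$, and by functoriality of Weil heights $h_{H'}(\pi(P))=h_{\pi^*H'}(P)+O(1)\le C\,h_H(P)+O(1)$ for all $P$. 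Applying this along the orbit $\{f^n(x)\}$ (so that $\pi(f^n(x))=g^n(\pi(x))$), taking $n$-th roots and passing to the limit superior, we obtain $d_1(g)=\alpha_g(\pi(x))=\overline{\alpha}_g(\pi(x))\le\overline{\alpha}_f(x)=\alpha_f(x)$. On the other hand $\alpha_f(x)=\overline{\alpha}_f(x)\le d_1(f)$ by Remark \ref{non-dense}. Since $d_1(f)=d_1(g)$, all these inequalities are equalities, so $\alpha_f(x)=d_1(f)$, which proves KSC for $f$.

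For the last assertion, suppose $f\in\Aut(X)$ and $\dim Y=\dim X-1$, so that the general fibre of $\pi$ is a curve. By the multiplicativity of the relative topological degree in a fibration, $1=d_{\dim X}(f)=d_{\dim Y}(g)\cdot d_1(f|_\pi)$, and since $d_{\dim Y}(g)\ge1$ we get $d_1(f|_\pi)=1/d_{\dim Y}(g)\le1\le d_1(g)$, so the hypothesis holds automatically.

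The main obstacle is the first step: one must justify the passage to an $f$-equivariant birational model on which $\pi$ becomes a morphism, together with the invariance of $d_1(f|_\pi)$ under this change of model, and one must invoke the product formula for relative dynamical degrees, which for rational (rather than regular) self-maps is the nontrivial input of \cite{DN11}. Once $\pi$ is a morphism, the rest — pushing a point with dense orbit down to $Y$, applying KSC for $g$ there, and comparing heights through the functoriality of Weil heights — is routine.
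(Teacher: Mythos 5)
Your proof is correct, and the last assertion is handled exactly as in the paper (top-degree case of the Dinh--Nguy\^en product formula together with $d_{\dim X}(f)=1$ for an automorphism). The only real divergence is in the main implication. The paper's argument is two lines: the product formula gives $d_1(f)=\max\{d_1(g),d_1(f|_\pi)\}=d_1(g)$, and then Lemma~\ref{generically finite}(2) (which already holds for dominant \emph{rational} maps $\pi$) immediately transfers KSC from $g$ to $f$. You instead pass to the normalization of the graph closure so as to make $\pi$ a morphism, and then re-derive the content of Lemma~\ref{generically finite}(2) by hand: you push a dense orbit forward to a dense $g$-orbit, use functoriality of Weil heights to get $d_1(g)=\alpha_g(\pi(x))\le\overline{\alpha}_f(x)$, and sandwich with $\overline{\alpha}_f(x)\le d_1(f)=d_1(g)$. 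Both routes are valid; yours is more self-contained and makes the height comparison explicit, while the paper's is shorter because it outsources the transfer step to the already-established lemma (which in turn means the reduction to a morphism is unnecessary, since the cited lemma and the product formula are both stated for rational self-maps). Your handling of the boundary cases $\dim Y=0$ (forced $d_1(f)=1$, excluded) and $\dim Y=\dim X$ ($\pi$ generically finite) is a sensible addition that the paper leaves implicit.
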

\begin{proof}
By the product formula \cite{Dang20,DN11,Truong20}, we have 
$$
    d_1(f)=\max\{d_1(g), d_1(f|_\pi)\}.
$$
Then $d_1(f)=d_1(g)$ if $d_1(f|_\pi)\le d_1(g)$.
Therefore, if $d_1(f|_\pi)\le d_1(g)$ and KSC holds for $g$, then KSC also holds for $f$ by Lemma \ref{generically finite}.

Now assume that $f$ is an automorphism and $\dim Y=\dim X-1$.
Another application of the product formula yields that $d_{\dim X}(f)=d_{\dim X-1}(g)d_1(f|_\pi)$.
Since $f$ is an automorphism, $d_{\dim X}(f)=1$, and so both terms on the right must be 1 as well.
So $d_1(f|_\pi)\le d_1(g)$.
\end{proof}
\begin{proof}[Proof of Theorem \ref{q(X)>X-1}]
 By \cite[Proposition 5.1]{CLO22}, we may assume that the Albanese morphism $\pi: X\to A$ is surjective and $\dim A=q(X)$.
 Notice that $f$ descents to a surjective endomorphism $f_A$ of $A$ by the universal property of the Albanese morphism.
If $\dim A=\dim X$, then KSC  holds for $(X,f)$ by Lemma \ref{generically finite} and Theorem \ref{Q-abelian}.
If $\dim A=\dim X-1$, then  KSC holds for $(X,f)$ by Theorem \ref{Q-abelian} and Proposition \ref{x=y-1}.
\end{proof}
The following is motivated by \cite[Proof of Proposition 1.7]{LS21}.
\begin{theorem}
\label{KSC-P1}
Let $f$ be an automorphism on a normal  projective variety $X$ with non-negative Kodaira dimension.
If $X$ admits a $f$-equivariant non-constant morphism $\pi: X\to \bP^1$, then $X$ does no have any dense  orbit.
\end{theorem}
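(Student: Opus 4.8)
The plan is to argue by contradiction. Suppose $x\in X(\overline{\bQ})_f$ has Zariski-dense forward orbit, and let $g\in\Aut(\bP^1)=\PGL_2$ be the automorphism determined by $\pi\circ f=g\circ\pi$. Since $\pi$ is a surjective morphism, $\overline{\mathcal O_g(\pi(x))}=\overline{\pi(\mathcal O_f(x))}\supseteq\pi(\overline{\mathcal O_f(x)})=\pi(X)=\bP^1$, so the orbit $\mathcal O_g(\pi(x))$ is infinite and hence $g$ has infinite order. I would then record two reductions: by Remark \ref{non-dense} the case $\kappa(X)>0$ is excluded, so $\kappa(X)=0$; and since an $f$-equivariant resolution changes neither the Kodaira dimension nor, through the composed morphism to $\bP^1$, the existence of a dense orbit, I may assume $X$ smooth.

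The heart of the argument is the observation that an infinite-order element of $\PGL_2$ fixes at most two points of $\bP^1$, and that every $g$-invariant finite subset of $\bP^1$ is contained in $\Fix(g)$. Hence the \emph{bad locus} $\Sigma\subset\bP^1$ of $\pi$ -- the complement of the largest open set over which $\pi$ is analytically locally trivial -- is finite and $g$-invariant, so $|\Sigma|\le 2$ and $\bP^1\setminus\Sigma$ is $\C$, $\C^*$ or $\bP^1$. I would next show that $\pi$ is isotrivial: the moduli map $\bP^1\setminus\Sigma\to\mathcal M$ of the family is $g$-equivariant, and a non-constant such map forces a $g$-invariant configuration of at least three special points in the compactified target -- for elliptic fibres this is the rigidity of the $j$-line, ramified only over $\{0,1728,\infty\}$; for the general fibre one reduces, via the Beauville--Bogomolov-type decomposition in the weak form of \cite[Lemma~2.7]{HL21} after \cite{Kawamata85,NZ10}, to an abelian, Calabi--Yau or irreducible symplectic fibre, and Matsushita's structure theorem on fibrations of (possibly singular) hyperk\"ahler varieties over $\bP^1$ forces the symplectic case to a K3 surface, returning us to the elliptic one -- contradicting that $g$ has infinite order. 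Once isotriviality is known, with general fibre $F$ (reduced beforehand to the case $\kappa(F)=0$), the canonical bundle formula of Fujino--Mori applied to $\pi$ gives $K_X\sim_\bQ\pi^*(K_{\bP^1}+B)$ with $B\ge 0$ supported on $\Sigma$, and $\kappa(X)=0$ then yields $\deg B=2$ and $K_X\sim_\bQ 0$; it is exactly this triviality of $K_X$, together with $\deg B=2$ concentrated on at most two fibres, that pins down the monodromy of $\pi$ over $\bP^1\setminus\Sigma$ to be of finite order.

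To conclude, I would use the isotriviality and the finiteness of the monodromy: after the cyclic base change $\bP^1\to\bP^1$ trivialising it, the total space $X'$ is birational to $F\times\bP^1$, so $\kappa(X')=\kappa(F)+\kappa(\bP^1)=-\infty$; but $X'\to X$ is finite surjective, whence $\kappa(X')\ge\kappa(X)=0$, a contradiction. I expect the isotriviality step in the middle paragraph to be the main obstacle: excluding a genuinely varying family over $\bP^1$ minus at most two points is where the equality $\kappa(X)=0$ (rather than merely $\kappa(X)\ge 0$) must be used in full, and where one is forced to pass through the non-canonical Beauville--Bogomolov cover and to invoke Matsushita's structure theory. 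A secondary technical point is the reduction to the case $\kappa(F)=0$ when $\kappa(F)$ could a priori be positive (using Iitaka's easy addition together with subadditivity for fibrations with fibres of general type), and the use of the implication $\kappa=-\infty\Rightarrow\text{uniruled}$, which is unconditional only in small dimension.
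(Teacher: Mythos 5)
Your proposal takes a genuinely different route from the paper, and the detour is where the trouble lies. The paper's proof is short and direct: after passing to a smooth $f$-equivariant model, it lets $Z\subset\bP^1$ be the discriminant (singular-fibre) locus, notes that $g(Z)=Z$ so some iterate of $g$ fixes $Z$ pointwise, and then invokes Viehweg--Zuo \cite[Theorem 0.2]{VZ01} to conclude $|Z|\ge 3$; hence that iterate of $g$ is the identity, so $\pi$ descends to an $f$-invariant non-constant rational function, ruling out a dense orbit. Your proof never cites Viehweg--Zuo. Instead you go through the contrapositive: you bound the ``bad locus'' $\Sigma$ by $2$ and then try to deduce isotriviality by hand, via moduli maps, the $j$-line, the (weak) Beauville--Bogomolov decomposition and Matsushita's theorem, and then derive a contradiction from the canonical bundle formula and a trivialising base change. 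But the implication ``$|\Sigma|\le 2$ implies isotriviality'' for families with $\kappa(X)\ge 0$ over $\bP^1$ is precisely (the contrapositive of) the Viehweg--Zuo theorem; your ad hoc reconstruction of it is both the hard part and the place where the sketch does not close.

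Concretely, three things go wrong. First, your $\Sigma$ is defined as the complement of the locus where $\pi$ is \emph{analytically locally trivial}; if the family is not isotrivial this locus is empty, so $\Sigma=\bP^1$ is not finite and the ``$|\Sigma|\le 2$'' step is vacuous before you have established isotriviality -- you need the discriminant locus $Z$ (singular fibres), as in the paper, not the local-triviality locus. Second, the ``moduli map to $\mathcal M$'' step is not available in the generality you need: there is no fine moduli variety for general fibres of $\kappa=0$ type, and the reduction through Beauville--Bogomolov and Matsushita applies only in very special situations (hyper-K\"ahler fibres) and does not cover an arbitrary fibre of Kodaira dimension zero, let alone the case $\kappa(F)>0$ which you flag but do not settle. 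Third, the Fujino--Mori canonical bundle formula needs $\kappa(F)=0$, and your proposed reduction to that case via easy addition and subadditivity is not carried out; in fact subadditivity over $\bP^1$ gives no lower bound on $\kappa(X)$ and does not by itself force $\kappa(F)=0$. The cleanest repair is exactly the paper's move: replace the entire isotriviality discussion by the single citation of Viehweg--Zuo, which directly yields $|Z|\ge 3$ and hence $g=\mathrm{id}$ after iteration.
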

\begin{proof}
By \cite[Theorem 13.2]{BM97}, there exists a resolution $\varphi: \widetilde{X}\to X$ and an automorphism $\widetilde{f}$ of $\widetilde{X}$ such that $\widetilde{f}\circ\varphi=\varphi\circ f$.
Then we may assume that $X$ is smooth.
Now assume that $f$ descends to an automorphism $g$ of $\bP^1$ as $\dim \bP^1=1$.
Let $Z\subset\bP^1$ be the locus of points $t$, where the fiber $X_t$ is singular.
Then $g(Z)=Z$.
Since $Z$ is a finite set, after replacing $f$ by a further iteration, we can assume $g$ fixes $Z$ point-wise.
By \cite[Theorem 0.2]{VZ01}, we know that $Z$ contains at least three points.
It follows that $g$ is the identity since it fixes at least three points of $\bP^1$.
In other words, there exists a rational function on $X$ that is invariant under some  iteration of $f$, which contradicts the fact that $X$ has a point with a dense orbit.
\end{proof}
\begin{remark}
\begin{itemize}
\item[(1)] 	If $f$ is a surjective endomorphism of a smooth projective 	variety $X$ with $\kappa(X)\ge0$ admits a surjective morphism $\pi: X\to Y\cong \bP^1$,  and  let $g: Y\to Y$ be an automorphism such that $\pi\circ f=g\circ\pi$, then $g$ is of finite order by \cite[Proposition 2.3]{NZ09}.
\item[(2)] In general, we would like to ask whether that $X$ does not have any dense orbit if $f$  is a birational self-map of  a normal  projective variety $X$ with $\kappa(X)\ge0$ admits a $f$-equivariant map $\pi: X\dasharrow \bP^1$.
\end{itemize}
\end{remark}
\begin{proposition}
\label{threefold}
Let $\pi: X\dashrightarrow Y$ be a dominant rational map of normal projective varieties with $3=\dim X>\dim Y\ge1$.
Let $f: X\rightarrow X$ and $g: Y\rightarrow Y$ be  surjective endomorphisms such that $g\circ\pi=\pi\circ f$.
Suppose $f$ is an automorphism.
Then to show KSC  for $(X,f)$, we only need to assume that $Y$ is $\bP^1$ or an elliptic curve.	
In particular, if $\kappa(X)\ge0$, then we may KSC to the case that  $Y$ is an elliptic curve.
\end{proposition}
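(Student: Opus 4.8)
The plan is to split the argument according to whether $\dim Y=2$ or $\dim Y=1$, and to dispose of every case except $Y\cong\bP^1$ or $Y$ an elliptic curve. If $\dim Y=2$, then $\dim Y=\dim X-1$, $f$ is an automorphism, and KSC holds for the surjective endomorphism $g$ of the normal projective surface $Y$ by Meng--Zhang \cite[Theorem 1.3]{MZ19}; hence Proposition~\ref{x=y-1} immediately yields KSC for $(X,f)$. So from now on $\dim Y=1$, and $Y$ is a smooth projective curve, of genus $0$, $1$, or $\ge 2$; the first two possibilities are exactly $Y\cong\bP^1$ and $Y$ an elliptic curve, so there is nothing to do there.

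Next I would treat the case that $Y$ has genus $\ge 2$, where $\kappa(Y)=1>0$. The first step is to make $\pi$ into a morphism $f$-equivariantly: the closure $\Gamma_\pi\subseteq X\times Y$ of the graph of $\pi$ is preserved by the automorphism $(f,g)$ of $X\times Y$ (since $g\circ\pi=\pi\circ f$), so an equivariant resolution $X'\to\Gamma_\pi$ of the restricted automorphism (cf. \cite[Theorem 13.2]{BM97}) produces a smooth projective threefold $X'$, an automorphism $f'$ of $X'$, a birational morphism $\mu\colon X'\to X$ with $\mu\circ f'=f\circ\mu$, and a surjective morphism $\pi'\colon X'\to Y$ with $\pi'\circ f'=g\circ\pi'$; by Lemma~\ref{generically finite}(1), KSC for $(X,f)$ is equivalent to KSC for $(X',f')$. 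Now if some $x'\in X'(\overline{\bQ})$ had a Zariski dense $f'$-orbit, then $\pi'(\mathcal O_{f'}(x'))=\mathcal O_g(\pi'(x'))$ would be a Zariski dense $g$-orbit in $Y$ (because $\pi'$ is surjective), contradicting Remark~\ref{non-dense} since $\kappa(Y)>0$. Hence $(X',f')$, and therefore $(X,f)$, has no dense orbit and KSC holds vacuously. Since the only remaining possibilities are genus $0$ and $1$, this reduces us to $Y\cong\bP^1$ or an elliptic curve.

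Finally, for the ``in particular'' clause, assume $\kappa(X)\ge0$ and $Y\cong\bP^1$. Using the same equivariant resolution $\pi'\colon X'\to\bP^1$ and the fact that $\kappa(X')=\kappa(X)\ge0$ (this is where the klt hypothesis enters: $\kappa$ is unchanged under a resolution of a klt variety), Theorem~\ref{KSC-P1} shows that $X'$ has no Zariski dense $f'$-orbit, so KSC holds vacuously for $(X',f')$, hence for $(X,f)$ by Lemma~\ref{generically finite}(1); this leaves only $Y$ an elliptic curve. The point requiring the most care is this passage from the rational map $\pi$ to the morphism $\pi'$: one must verify that an $(f,g)$-equivariant resolution of $\Gamma_\pi$ exists with $f$ lifting to it, and---most delicately---that this resolution preserves $\kappa\ge0$, which genuinely uses that $X$ is klt (the rational-map analogue of Theorem~\ref{KSC-P1} without such a hypothesis being open; cf. the remark following Theorem~\ref{KSC-P1}).
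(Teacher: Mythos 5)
Your reduction in the case $\dim Y=2$ is identical to the paper's (Meng--Zhang \cite[Theorem 1.3]{MZ19} plus Proposition~\ref{x=y-1}), and for $g(Y)\ge2$ you are in fact \emph{more} explicit than the paper: the paper simply asserts that $Y$ has no dense $g$-orbit by Remark~\ref{non-dense} and concludes KSC for $(X,f)$, leaving the passage from a rational map to a dense-orbit statement on $Y$ implicit, whereas you route this through the graph $\Gamma_\pi$ and an equivariant resolution. One small point to be careful about there: $g$ is a priori only a surjective endomorphism of $Y$, not an automorphism, so ``the automorphism $(f,g)$ of $X\times Y$'' is not immediate. For $g(Y)\ge2$ it holds by Riemann--Hurwitz; for $Y\cong\bP^1$ one should instead observe that $(f\times g)$ restricted to $\Gamma_\pi$ is a proper bijective (hence, after normalizing $\Gamma_\pi$, an automorphism) lift of $f$, since $p\colon\Gamma_\pi\to X$ is birational and $f$ is an automorphism.

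The genuinely divergent part is your treatment of the ``in particular'' clause. The paper's proof of that clause reads verbatim: ``If $\kappa(X)\ge0$ \emph{and $\pi$ is a morphism}, then the proof follows from Theorem~\ref{KSC-P1}.'' That is, the paper only establishes this under the extra hypothesis that $\pi$ is already a morphism (which is indeed all that is needed in the one place the clause is invoked, namely the proof of Theorem~\ref{reduced-auto}, where the graph has already been taken). You try to remove that hypothesis by resolving $\Gamma_\pi$ and applying Theorem~\ref{KSC-P1} on the smooth model $X'$; but then you need $\kappa(X')\ge0$, which does not follow from $\kappa(X,K_X)\ge0$ for an arbitrary normal projective $X$, and you invoke a klt hypothesis on $X$ --- a hypothesis that is \emph{not} in the statement of Proposition~\ref{threefold}. (In the actual application, $X$ is a weak Calabi--Yau threefold, hence has canonical singularities with $K_X\sim0$, so the resolved model does have $\kappa\ge0$; but that is a feature of the application, not of the proposition as stated.) So your argument is not the paper's argument: the paper proves a restricted version of the ``in particular'' clause by assuming $\pi$ is a morphism, while you attempt the unrestricted version at the cost of smuggling in a singularity hypothesis. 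It would be cleaner to either state the extra ``$\pi$ a morphism'' hypothesis, as the paper's proof effectively does, or to flag that the clause as written is only being proved under an additional assumption on the singularities of $X$.
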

\begin{proof}
If  $\dim Y=2$, then Conjecture \ref{KSC} holds for $(Y,g)$ by \cite[Theorem 1.3]{MZ19}.
Then Conjecture \ref{KSC} holds for $(X,f)$ by Proposition \ref{x=y-1}.
Then we may assume that $\dim Y=1$.
If $g(Y)\ge2$ ( i.e., $\kappa(Y)>0$), then $Y$ does no have any dense $g$-orbit by Remark \ref{non-dense}.
So Conjecture \ref{KSC} is true for $(X,f)$.
Therefore, we may assume that $Y$ is $\bP^1$ or an elliptic curve.
If $\kappa(X)\ge0$ and $\pi$ is a morphism, then the proof follows from Theorem \ref{KSC-P1}.
\end{proof}
\section{Automorphisms on projective threefolds}
\label{MainSect}
Now we  quote the definition of a weak Calabi-Yau variety in \cite[Definition 2.4]{HL21}.
\begin{definition}\label{defn-wcy}
A normal projective variety $X$ is called a \emph{weak Calabi-Yau variety}, if
\begin{itemize}
\item $X$ has only canonical singularities;
\item the canonical divisor $K_X\sim 0$; and
\item the \emph{augemented irregularity} $\widetilde{q}(X)=0$.
\end{itemize}
Here, the augumented irregularity $\widetilde{q}(X)$ of $X$ is defined as follows:
$$\widetilde{q}(X)=\sup\big\{ q(Y) : Y\to X \text{ is  a finite surjective and \'etale in codimension one}\big\}.$$
\end{definition}
The following {\it special MRC fibration}  is due to Nakayama \cite{Nakayama10}.
\begin{definition}
\cite[Definition 2.10]{HL21}
Given	a projective variety $X$, a dominant rational map $\pi: X\dashrightarrow Z$ is called the  special MRC fibration of $X$, if it satisfies the following conditions:
\begin{itemize}
\item[(1)] The graph $\Gamma_\pi\subseteq X\times Z$ of $\pi$ is equidimensional over $Z$.
\item[(2)] The general fibers of $\Gamma_\pi\to Z$ are rationally connected.
\item[(3)] $Z$ is a non-uniruled normal projective variety.
\item[(4)] If $\pi':X\dasharrow Z'$ is a dominant rational map satisfying (1)-(3), then there is a birational morphism $v: Z'\to Z$ such that $\pi=v\circ\pi'$.
\end{itemize}
\end{definition}
\begin{proposition}
\cite[Proposition 4.14]{Nakayama10}
\label{Chow-red}
Let $\pi:X\dasharrow Y$ be a dominant rational map from a projective variety $X$ to a normal projective variety $Y$.
Then there exists a normal projective variety $T$ and a birational map $\mu: Y\dasharrow T$ satisfying the following conditions:
\begin{itemize}
\item[(1)] The graph $\gamma_{\mu\circ\pi}:\Gamma_{\mu\circ\pi}\to T$ of $\mu\circ \pi$ is equi-dimensional.
\item[(2)] Let $\mu': Y\dasharrow T'$ be a birational map to another normal projective variety $T'$ such that the graph $\gamma_{\mu'\circ\pi}:\Gamma_{\mu'\circ\pi}\to T'$ of $\mu'\circ\pi$ is equi-dimensional.
Then there exists a birational morphism $\nu: T'\to T$ such that $\mu=\nu\circ \mu'$.
\end{itemize}
\end{proposition}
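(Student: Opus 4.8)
This is the standard construction of the ``equidimensional reduction'' (or Chow reduction) of a fibration, and the plan is to obtain $T$ from a Chow variety of $X$ and to extract both assertions from basic properties of families of cycles together with fibre‑dimension counts. Write $n=\dim X$, $m=\dim Y$, $d=n-m$. For the construction: for a general point $y\in Y$ the fibre $X_y$ of $\pi$ over $y$ is a well‑defined effective $d$‑cycle of $X$, of some fixed degree $\delta$ (for a chosen polarization), and these cycles vary algebraically over a dense open $Y^\circ\subseteq Y$ --- for instance over the flat locus of a fixed resolution of $\pi$ --- yielding a morphism $\sigma\colon Y^\circ\to\Chow_{d,\delta}(X)$. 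I would then take $T$ to be the normalization of the closure of the graph of $\sigma$ in $Y\times\Chow_{d,\delta}(X)$, with $\beta\colon T\to Y$ the (projective, birational) first projection, $\mu:=\beta^{-1}$, and $\tau\colon T\to\Chow_{d,\delta}(X)$ the second projection.

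To prove (1), pull back along $\tau$ the support $\mathcal{W}\subseteq\Chow_{d,\delta}(X)\times X$ of the universal cycle, obtaining $\mathcal{W}_T\subseteq T\times X$. Every fibre of $\mathcal{W}_T\to T$ is the support of a nonzero effective $d$‑cycle, hence has pure dimension $d$, so $\mathcal{W}_T\to T$ is equidimensional of relative dimension $d$. On the open set $\beta^{-1}(Y^\circ)$ one checks that $\mathcal{W}_T$ and $\Gamma_{\mu\circ\pi}$ have the same points, namely $\{(t,x):x\in X_{\beta(t)}\}$, and since $\Gamma_{\mu\circ\pi}$ is the closure of its restriction there, $\Gamma_{\mu\circ\pi}\subseteq\mathcal{W}_T$ as closed subsets. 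Hence every fibre of $\gamma_{\mu\circ\pi}$ has dimension $\le d$; and since $\gamma_{\mu\circ\pi}$ is surjective (source and target projective, the map dominant) with $\dim\Gamma_{\mu\circ\pi}=n=\dim T+d$, every fibre also has dimension $\ge d$. Thus $\gamma_{\mu\circ\pi}$ is equidimensional.

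For (2), let $\mu'\colon Y\dashrightarrow T'$ with $T'$ normal projective be such that $\gamma_{\mu'\circ\pi}$ is equidimensional. The first step is to show that $(\mu')^{-1}$ must already be a morphism. After resolving $\pi$ if necessary, there is a surjection $\Gamma_{\mu'\circ\pi}\to\Gamma_{\mu'}$ onto the graph of $\mu'$ through which $\gamma_{\mu'\circ\pi}$ factors as $\Gamma_{\mu'\circ\pi}\to\Gamma_{\mu'}\xrightarrow{\,b\,}T'$; every fibre of the first map has dimension $\ge d$, so if some fibre of the birational morphism $b$ had positive dimension, the corresponding fibre of $\gamma_{\mu'\circ\pi}$ would have dimension $>d$, contradicting equidimensionality. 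Hence $b$ is quasi‑finite, so finite, so (as $T'$ is normal) an isomorphism, and $\beta':=(\mu')^{-1}\colon T'\to Y$ is a birational morphism. Now $\Gamma_{\mu'\circ\pi}\to T'$ is a proper equidimensional family of closed subschemes of $X$ over the normal base $T'$, so it is induced by a morphism $\sigma'\colon T'\to\Chow_d(X)$; it coincides with $\sigma\circ\beta'$ over $(\beta')^{-1}(Y^\circ)$, so $\sigma'(T')\subseteq\Chow_{d,\delta}(X)$ by connectedness of $T'$. Then $(\beta',\sigma')\colon T'\to Y\times\Chow_{d,\delta}(X)$ is a morphism whose image is the closure of the graph of $\sigma$, so it factors through a morphism $\nu\colon T'\to T$; one checks $\nu\circ\mu'=\mu$ on a dense open of $Y$, and $\nu$ is unique because $T$ is separated.

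The fibre‑dimension bookkeeping above is elementary; the step I expect to be the main obstacle is supplying the cycle‑theoretic inputs rigorously --- namely the existence of $\Chow_{d,\delta}(X)$ together with its universal cycle for the possibly singular projective variety $X$, and, more delicately, the fact used in (2) that a proper equidimensional family of cycles over a normal (indeed, reduced seminormal) base is the pullback of the universal cycle under a unique morphism to the Chow variety. Handling the indeterminacy loci of $\pi$ and of $\mu'$ so that these inputs apply cleanly is where the real work lies; granting them, (1) and (2) follow from the fibre‑dimension squeezes above, the second one supplemented by Zariski's main theorem.
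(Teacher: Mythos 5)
The paper does not prove this proposition: it is quoted as \cite[Proposition 4.14]{Nakayama10} and used as a black box, so there is no in-paper argument to compare yours against. Your plan---a classifying map from a dense open of $Y$ to a Chow variety of $d$-cycles, taking $T$ to be the normalization of the closure of its graph in $Y\times\Chow_{d,\delta}(X)$, and proving (1) and (2) by fiber-dimension counts plus the universal property of the Chow variety---is exactly the construction that the name ``Chow reduction'' refers to, and is the route Nakayama takes. Your proof of (1) is correct at this level of detail, and you rightly identify the non-elementary inputs: existence of $\Chow_{d,\delta}(X)$ with a universal cycle for a (possibly singular) projective $X$, and the fact that a proper equidimensional family over a seminormal---in particular normal---base is a well-defined family of cycles and so admits a unique classifying morphism. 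These are in Koll\'ar's treatment of Chow varieties, and granting them (1) is complete.

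The one step in (2) that is not yet a proof is the claim that $(\mu')^{-1}$ is a morphism. You say ``after resolving $\pi$'' there is a morphism $\Gamma_{\mu'\circ\pi}\to\Gamma_{\mu'}$, but resolving $\pi$ replaces $\Gamma_{\mu'\circ\pi}$ by the graph of $\mu'$ composed with the \emph{resolved} map, and the equidimensionality hypothesis was made on the original graph $\Gamma_{\mu'\circ\pi}\subset X\times T'$; these can differ over special points of $T'$, so the hypothesis does not transfer for free. The fix is to work with the triple graph $\Gamma^\dagger\subset X\times Y\times T'$ (the closure of $\{(x,\pi(x),\mu'(\pi(x)))\}$), which maps to both $\Gamma_{\mu'\circ\pi}$ and $\Gamma_{\mu'}$ by honest morphisms: if $b\colon\Gamma_{\mu'}\to T'$ had a positive-dimensional fiber over $t_0$, then the fiber of $\Gamma^\dagger\to T'$ over $t_0$ has dimension $\ge d+1$, and one must still argue that its image under the projection $\Gamma^\dagger\to\Gamma_{\mu'\circ\pi}$ (forgetting $Y$) has dimension $\ge d+1$---for instance because the sweep $\bigcup_{y\in C}X_y$ responsible for the excess dimension meets the domain of $\pi$, so the projection is generically finite there. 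That is true, but as written your argument passes over both the change of graph and the possibility that the projection contracts the excess fiber; these need to be addressed before the ``$(\mu')^{-1}$ is a morphism'' step is justified. The remainder of (2) (uniqueness of the classifying map, factoring through $T$ by normality, uniqueness of $\nu$ by separatedness) is fine.
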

We call the composition $\mu\circ\pi: X\dasharrow T$ above satisfying Proposition \ref{Chow-red} (1)-(2) the \emph{Chow reduction} of $\pi: X\dasharrow Y$, which is unique up to isomorphism.
\begin{theorem}\label{descent}
Let $\pi: X\dasharrow Z$ be the special MRC fibration, and $f$ be an automorphism of $X$.
Then there exists a  birational morphism $p: W\to X$ and an automorphism $f_W\in\Aut(W)$ and an equi-dimensional surjective morphism $q: W\to Z$ satisfying the following conditions:
\begin{itemize}
\item[(1)]  $W$ is a normal projective variety.
\item[(2)] A general fiber of $q$ is rationally connected.
\item[(3)] $W$ admits $f_W$-equivariant fibration over $X$ and $Z$.	
\end{itemize}
\end{theorem}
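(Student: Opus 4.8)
The plan is to first promote $\pi$ to a genuinely $f$-equivariant \emph{rational} map by using the universal property of the special MRC fibration, and then to make the equivariance morphic by passing to the normalization of the graph $\Gamma_\pi$, which is already equidimensional over $Z$ with rationally connected general fibres by the very definition of a special MRC fibration, so that no further blow-up is needed. For the descent, I would first observe that, since $f\in\Aut(X)$, the composite $\pi\circ f\colon X\dasharrow Z$ also satisfies conditions (1)--(3) in the definition of the special MRC fibration: indeed the graph of $\pi\circ f$ equals $(f^{-1}\times\id_Z)(\Gamma_\pi)$, and $f^{-1}\times\id_Z$ is an automorphism of $X\times Z$ commuting with the projection to $Z$, so that graph is equidimensional over $Z$ with rationally connected general fibres, while the target $Z$ is unchanged and non-uniruled. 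Applying the universal property (4) of $\pi$ to $\pi'=\pi\circ f$ and to $\pi'=\pi\circ f^{-1}$ yields birational morphisms $v_1,v_2\colon Z\to Z$ with $\pi\circ f^{-1}=v_1\circ\pi$ and $\pi\circ f=v_2\circ\pi$; composing these two identities and using that $\pi$ is dominant gives $v_1\circ v_2=v_2\circ v_1=\id_Z$. Hence $g:=v_2\in\Aut(Z)$ and $\pi\circ f=g\circ\pi$ as rational maps.

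Next I would set $\Gamma:=\Gamma_\pi\subseteq X\times Z$ and let $\nu\colon W\to\Gamma$ be the normalization, so that $W$ is normal and projective. On a dense open $U\subseteq X$ over which $\pi$ and $\pi\circ f$ are morphisms and $\pi\circ f=g\circ\pi$ holds, the automorphism $f\times g$ of $X\times Z$ carries $\{(x,\pi(x)):x\in U\}$ onto $\{(y,\pi(y)):y\in f(U)\}$; since $f(U)$ is dense, passing to closures gives $(f\times g)(\Gamma)=\Gamma$, so $f\times g$ restricts to an automorphism of $\Gamma$, which lifts by functoriality of normalization to $f_W\in\Aut(W)$. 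Composing $\nu$ with the two projections gives a birational morphism $p\colon W\to X$ (because $\Gamma\to X$ is birational) and a surjective morphism $q\colon W\to Z$ (because $\pi$ is dominant and $W,Z$ are projective); since the two projections $X\times Z\to X$ and $X\times Z\to Z$ intertwine $f\times g$ with $f$ and with $g$ respectively, we obtain $p\circ f_W=f\circ p$ and $q\circ f_W=g\circ q$, which is condition (3).

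It remains to verify (1) and (2). Equidimensionality of $q$ holds because $\Gamma\to Z$ is equidimensional by condition (1) in the definition of the special MRC fibration and $\nu$ is finite. For rational connectedness of the general fibre: the non-normal locus $N\subseteq\Gamma$ is a proper closed subset, so by equidimensionality a general fibre $\Gamma_z$ is irreducible of the expected dimension and meets $\Gamma\setminus N$ in a dense open; hence $\nu$ restricts to a birational morphism of proper varieties $W_z\to\Gamma_z$. Since $\Gamma_z$ is rationally connected for general $z$ by condition (2) in that definition, so is $W_z$, as the strict transform under $\nu$ of a rational curve joining two general points of $\Gamma_z$ is a rational curve joining their preimages in $W_z$.

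The step I expect to be the crux is the descent: it is precisely the universal property (4) of the special MRC fibration that forces the induced self-map of $Z$ to be two-sided invertible, hence an automorphism rather than merely a birational self-map --- if one only knew that $f$ induced a birational self-map of $Z$, controlling its dynamics would be exactly the obstruction one is trying to avoid. Everything afterwards is essentially bookkeeping, the only mildly technical point being the dimension count that guarantees that normalization does not destroy rational connectedness of the general fibre.
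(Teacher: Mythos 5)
Your proof is correct and follows the same route as the sources the paper cites for this statement (Nakayama, Theorem~4.19, and Hu--Li, Lemma~2.11): use the universal property of the special MRC fibration to descend $f$ to an automorphism $g$ of $Z$, take $W$ to be the normalization of the graph $\Gamma_\pi$, lift $f\times g$ to $f_W\in\Aut(W)$ via functoriality of normalization, and check the resulting diagram. The paper itself gives no argument, only the citation, so your write-up is an explicit unpacking of what those references establish; in particular, the observation that applying the universal property to both $\pi\circ f$ and $\pi\circ f^{-1}$ forces the induced self-map of $Z$ to be a two-sided inverse, hence an automorphism, is exactly the key descent step.

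One point deserves tightening. When verifying that the general fibre $W_z=\nu^{-1}(\Gamma_z)$ is rationally connected, you assert that $\nu$ restricts to a birational morphism $W_z\to\Gamma_z$. Irreducibility of $\Gamma_z$ comes from its rational connectedness, not from equidimensionality of $\Gamma_\pi\to Z$; and $W_z$ could a priori acquire extra irreducible components sitting over $N\cap\Gamma_z$, since normalization need not commute with passing to fibres. To exclude these, invoke generic flatness for $q\colon W\to Z$: over a dense open of $Z$ the fibres of $q$ are pure of dimension $\dim W-\dim Z$, while $N'\cap W_z$ (with $N'=\nu^{-1}(N)$) has strictly smaller dimension for general $z$, so every component of $W_z$ meets $W\setminus N'$, hence dominates $\Gamma_z$; since $\nu$ is an isomorphism over $\Gamma_z\setminus N$, there is only one such component, and $W_z$ is irreducible. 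After that your strict-transform argument, or simply the birational invariance of rational connectedness, completes the proof.
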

\begin{proof}
It follows from \cite[Theorem 4.19]{Nakayama10} or \cite[Lemma 2.11]{HL21}.	
\end{proof}
\begin{proof}[Proof of Theorem \ref{reduced-auto}]
We first suppose $K_X\sim_\bQ0$.
Then by \cite[Lemma 2.7]{HL21}, there exists a finite surjective morphism   $\pi: \widetilde{X} \to X$ and an automorphism $\widetilde{f}$ of $\widetilde{X}$ such that the following statements hold.
\begin{itemize}
	\item $X\cong Z\times A$ for a weak Calabi-Yau variety $Z$ and an abelian variety $A$.
	\item $\dim A=\widetilde{q}(X)$.
	\item There are automorphisms $\widetilde{f}_Z$ and $\widetilde{f}_A$ of $Z$ and $A$ respectively, such that the following diagram commutes:
$$
\xymatrix{
X  \ar[d]_f  &    \widetilde{X} \ar[l]_{\pi}  \ar[r]^{\cong}  \ar[d]_{\widetilde{f}} & Z\times A \ar[d]^{\widetilde{f}_Z\times \widetilde{f}_A}\\
X   &  \widetilde{X} \ar[r]^{\cong}  \ar[l]_{\pi} & Z\times A.\\
}
$$
\end{itemize}
If $\widetilde{q}(X)=3$, then it follows from Theorem \ref{Q-abelian} and Proposition \ref{generically finite}.
If $\dim Z>0$, then $\dim Z\ge2$ since  $\widetilde{q}(Z)=0$ and $K_Z\sim0$.
If $\dim Z=2$, then $\dim A=1$.
Consider the natural projection $\mathrm{pr}_1: \widetilde{X}\to Z$.
Then by Propositions  \ref{generically finite} and \ref{threefold}, KSC  is true for $(X,f)$.
Now assume that $\dim Z=3$ and $f$  is imprimitive.
 Then there is a rational map $\pi: X\dasharrow Y$ and a birational map $g: Y\dasharrow Y$ such that $\pi\circ f=g\circ \pi$.
By Proposition \ref{Chow-red}, there exists a birational morphism $\mu: Y\to Z$ such that $\pi'=\mu\circ \pi: X\dasharrow Z$ is the Chow reduction of $\pi$.
Then $f$ descents to  an automorphism $h$ of $Z$ by Proposition \ref{Chow-red}.
 By taking the graph of $\pi'$, it suffices to consider the case when $\pi'$ is morphism by  Lemma  \ref{generically finite}.
By Proposition \ref{threefold}, we may assume that  $Y$ is  an elliptic curve.
This completes the proof of Theorem \ref{reduced-auto}(1) as $X$ has trivial Albanese.

Now we assume that $\kappa(X)=-\infty$.
Consider the special MRC fibration $\pi: X\dasharrow Z$.
By Theorem \ref{descent} and Lemma  \ref{generically finite}, we may assume that $\pi: X\to Z$ is $f$-equivariant morphism.
There are an equi-dimensional surjective morphism $q: W\to Z$ and the general fiber of $q$ is rationally connected.
If $\dim Z=0$, then $X$ is rationally connected.
Now assume that $\dim Z>0$.
Here $Z$ is non-uniruled.
By Proposition \ref{threefold}, $Z$ is an elliptic curve.
So $q(X)>0$.
Take the $f$-equivariant resolution for $(X,f)$,  then by Proposition \ref{generically finite}, we assume that $f$ is an automorphism of a  smooth projective threefold with $q(X)>0$.
Therefore, KSC is true for $f$ by \cite[Theorem 1.6(2)]{CLO22}.
\end{proof}
\section{On projective varieties with Picard number two}
\label{Pic=2}
An interesting example of  projective varieties with Picard number two is a projective bundle $X$ over a normal projective variety $Y$ with Picard number $\rho(Y)=1$.
Then we ask the following question.
\begin{question}\label{que-proj-bundle}
Is KSC true for any surjective endomorphism of a projective bundle $X$ over a normal projective variety $Y$ with $\rho(Y)=1$?
\end{question}
\begin{remark}
Question \ref{que-proj-bundle} is affirmative  if $Y$ is a smooth Fano variety (cf. \cite{LM21,LS21}).
\end{remark}
To study KSC on projective varieties with Picard number two, we  first give the following result, which is motivated by Sano's theorem \cite{Sano17} on arithmetic and dynamical degrees. 
\begin{proposition}
\label{reduce-Pic2}
Let $f$ be a surjective endomorphism of a normal projective variety $X$ with the Picard number $\rho(X)=2$.
Let $\lambda_1$ and $\lambda_2$ be two eigenvalues of $f^*|_{\NS_{\bR}(X)}$ such that $|\lambda_1|\ge|\lambda_2|$.
To prove KSC is true for $(X,f)$, then we may assume that $f$ is int-amplified (i.e. $d_1(f)=\lambda_1>\lambda_2>1)$ or $\lambda_2=1$.
Moreover, we may assume that $f^*|_{\NS_{\bR}(X)}$ is diagonalizable with positive eigenvalues $p$ and $q$ with $p>q\ge1$.
\end{proposition}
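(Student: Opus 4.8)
The plan is to determine the linear automorphism $A:=f^{*}|_{\NS_{\bR}(X)}$ of the two-dimensional space $\NS_{\bR}(X)$ as explicitly as possible, after replacing $f$ by an iterate. First I would invoke Remark~\ref{non-dense} to assume $d_{1}(f)>1$; as $f$ is a morphism, $d_{1}(f)$ equals the spectral radius of $A$, so $d_{1}(f)=|\lambda_{1}|$. Since pullback by a morphism preserves nefness, $A$ maps the pointed full-dimensional cone $\Nef(X)$ into itself, and a real $2\times2$ matrix with a conjugate pair of non-real eigenvalues acts on rays as a nontrivial rotation and hence cannot preserve a proper cone; therefore $\lambda_{1},\lambda_{2}\in\bR$. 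By Theorem~\ref{eigenvector} (or by the Perron--Frobenius theorem applied to $A$ and $\Nef(X)$) there is a nonzero nef class $D_{+}$ with $AD_{+}=d_{1}(f)D_{+}$, so $d_{1}(f)$ is an eigenvalue of $A$; relabelling, $\lambda_{1}=d_{1}(f)>1$, while $\lambda_{2}\neq0$ (as $A$ is invertible) and $|\lambda_{2}|\le\lambda_{1}$. Passing from $f$ to $f^{2}$ changes neither the validity of Conjecture~\ref{KSC} for the pair (one has $d_{1}(f^{2})=d_{1}(f)^{2}$ and $\alpha_{f^{2}}=\alpha_{f}^{2}$, and if $\mathcal{O}_{f}(x)$ is Zariski dense then, $X$ being irreducible, so is one of $\mathcal{O}_{f^{2}}(x),\ \mathcal{O}_{f^{2}}(f(x))$, with $\alpha_{f}(f(x))=\alpha_{f}(x)$) nor the shape of the desired conclusion ($q>1$ and $q=1$ are each preserved by squaring); so I may assume $\lambda_{2}>0$, and I write $p:=\lambda_{1}=d_{1}(f)>1$ and $q:=\lambda_{2}$, with $p\ge q>0$.

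Next I would eliminate the two configurations excluded by the statement by showing that Conjecture~\ref{KSC} already holds in each of them. If $p=q$ and $A$ is scalar, then $f$ is numerically, hence (after adjusting the polarising ample divisor by a numerically trivial one) genuinely, polarised, so Conjecture~\ref{KSC} holds by \cite[Theorem~5]{KS16}; equivalently, the eigendivisor $D_{+}$ of Theorem~\ref{eigenvector} is then ample, whence $\kappa(X,D_{+})=\dim X>0$ and Proposition~\ref{lead-eigen}(1) applies. The only remaining possibility with $p=q$ is that $A$ is a single Jordan block of eigenvalue $d_{1}(f)>1$; this is not expected for the leading eigenvalue, and in any case is settled by the canonical-height construction of \cite{KS16-tams}. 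If $0<q<1$, then $d_{1}(f)$ is a simple eigenvalue strictly dominating the other one: using the honest $\bR$-linear eigendivisor $D_{+}$ of Theorem~\ref{eigenvector} I would form the canonical height $\widehat h_{D_{+}}$ with $\widehat h_{D_{+}}\circ f=d_{1}(f)\,\widehat h_{D_{+}}$, observe that the complementary eigendirection (eigenvalue $q<1$) contributes only a bounded height along any forward orbit, and conclude that, since $D_{+}$ and the $q$-eigenclass span $\NS_{\bR}(X)$, the vanishing $\widehat h_{D_{+}}(x)=0$ would force $h_{H}(f^{n}(x))$ bounded, hence $x$ preperiodic, contradicting Zariski-density of $\mathcal{O}_{f}(x)$; thus $\widehat h_{D_{+}}(x)>0$ and $\alpha_{f}(x)=d_{1}(f)$ (this last step is immediate from Proposition~\ref{lead-eigen}(1) whenever $\kappa(X,D_{+})>0$). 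After discarding these cases what remains is precisely $p>q$ with $q\ge1$: then $A$ is diagonalisable over $\bR$ with positive eigenvalues $p>q\ge1$, and $f$ is int-amplified when $q>1$ while $\lambda_{2}=q=1$ otherwise, which is the asserted reduction.

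The main obstacle is the analysis of the two excluded configurations, where the conjecture must be proved rather than merely reduced. The delicate point is that the complementary eigenclass of $A$ in $\NS_{\bR}(X)$ need not be represented by a divisor that is $\bR$-linearly (and not merely numerically) equivalent to its own pullback scaled by the eigenvalue, so in running the canonical-height argument one has to show that the error terms coming from numerically trivial divisors grow strictly more slowly than $d_{1}(f)^{n}$. This is exactly where Theorem~\ref{eigenvector} --- which does produce an honest eigendivisor for the dominant eigenvalue --- and the contraction afforded by $q<1$ enter, and where one falls back on \cite{KS16} and \cite{KS16-tams} for the polarised and Jordan-block situations; my preference would be to route the argument through Proposition~\ref{lead-eigen}(1) whenever $\kappa(X,D_{+})>0$ can be verified, and to use the canonical-height estimates only otherwise.
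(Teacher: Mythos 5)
Your overall architecture mirrors the paper's---reduce to $d_1(f)>1$, observe $\lambda_1=d_1(f)$, pass to $f^2$ to make the eigenvalues positive, and split off the cases excluded by the statement---but you have not actually closed the excluded cases, and the missing ingredient is exactly the one the paper leans on: Sano's theorem (\cite[Theorem 1.1]{Sano17}). The paper disposes of both situations $|\lambda_1|=|\lambda_2|>1$ and $|\lambda_1|>1>|\lambda_2|$ with that single citation and then reduces to $|\lambda_1|>|\lambda_2|\ge1$, after which passing to $f^2$ gives the diagonalizable form with positive eigenvalues.

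Your treatment of those excluded cases has genuine gaps. For $p=q$ with $A$ scalar you claim ``numerically polarised implies genuinely polarised''; this is true but needs a citation (e.g.\ it is a nontrivial fact from Meng--Zhang's building-block theory), not a parenthetical shrug. For $p=q$ with a nontrivial Jordan block you simply defer to \cite{KS16-tams}, which is not a proof. Most importantly, for $0<q<1$ you sketch a canonical-height contraction argument and then, in your final paragraph, candidly point out the obstruction yourself: the $q$-eigenclass lives only in $\NS_\bR(X)$ and need not lift to a divisor with $f^*E\sim_\bR qE$, so the ``bounded contribution'' estimate is not automatic. You identify the problem but do not solve it; Sano's paper is precisely the place where this is solved (via careful control of the error terms coming from $\Pic^0$ and from Jordan blocks of small eigenvalue). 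Replacing your three ad hoc excursions by the single appeal to \cite[Theorem 1.1]{Sano17} both fills the gap and matches the intended proof. Your observation that $A$ has no non-real eigenvalues because it preserves the nef cone is a correct and slightly more explicit justification than the paper offers for why $\lambda_1,\lambda_2\in\bR$, and is a nice touch.
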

\begin{proof}
It is known that $1\le \alpha_f(x)\le d_1(f)$ by \cite[Theorem 4]{KS16} and \cite[Theorem 1.4]{Matsuzawa20b}.
Now we may assume that $d_1(f)>1$.
Note that $|\lambda_1|=d_1(f)$.
If $|\lambda_1|=|\lambda_2|>1$ or $|\lambda_1|>1>|\lambda_2|$, then the arithmetic degree $\alpha_f(x)=d_1(f)$ by \cite[Theorem 1.1]{Sano17}.
Then we may assume that $|\lambda_1|>|\lambda_2|\ge1$.
After replacing $f$ by $f^2$, $f^*|_{\NS_{\bR}(X)}$ is diagonalizable with positive eigenvalues $d_1(f)$ and $\lambda_2$ with $d_1(f)>\lambda_2\ge1$ as $\rho(X)=2$.
If $\lambda_2>1$, then $f$ is int-amplified by  \cite[Theorem 1.1]{Meng20}.
This completes the proof of  Proposition \ref{reduce-Pic2}.
\end{proof}
\begin{corollary}
\label{CY-Pic2}
Let $f$ be a surjective endomorphism of a weak Calabi-Yau variety $X$ with  $\rho(X)=2$.
Let $\lambda_1$ and $\lambda_2$ be two eigenvalues of $f^*|_{\NS_{\bR}(X)}$ such that $|\lambda_1|\ge|\lambda_2|$.
To prove KSC is true for $(X,f)$, then we may assume that $\lambda_2=1$.
Moreover, we may assume that $f^*|_{\NS_{\bR}(X)}$ is diagonalizable with positive eigenvalues $p>q\ge1$.
\end{corollary}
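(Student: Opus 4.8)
The plan is to deduce Corollary~\ref{CY-Pic2} from Proposition~\ref{reduce-Pic2} by showing that, of the two alternatives supplied there, the int-amplified one never occurs for a weak Calabi-Yau variety, so that only the case $\lambda_2=1$ survives.

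First I would set up the reduction exactly as in Proposition~\ref{reduce-Pic2}. By Remark~\ref{non-dense} we may assume $d_1(f)>1$, so $|\lambda_1|=d_1(f)>1$; Proposition~\ref{reduce-Pic2} then reduces the proof of KSC for $(X,f)$ to the two cases ``$f$ is int-amplified'' (with $\lambda_1>\lambda_2>1$) or ``$\lambda_2=1$'', and in either case, after replacing $f$ by $f^2$, the operator $f^{*}|_{\NS_{\bR}(X)}$ becomes diagonalizable with positive eigenvalues $p>q\ge1$. So it suffices to rule out the int-amplified case; then $\lambda_2=1$ forces $q=1$, and the ``moreover'' clause is inherited verbatim from Proposition~\ref{reduce-Pic2}.

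Next I would show that a surjective endomorphism $f$ of a weak Calabi-Yau variety $X$ is never int-amplified. Since $K_X\sim0$ and $f$ is finite and surjective between normal varieties, the ramification formula gives $f^{*}K_X=K_X+R_f$ with $R_f\ge0$; hence $R_f\sim0$, so $R_f=0$ and $f$ is quasi-\'etale (\'etale in codimension one). Now $X$ is klt with $K_X\equiv0$ and augmented irregularity $\widetilde q(X)=0$, so by the Beauville-Bogomolov type decomposition of H\"oring-Peternell \cite{HP19} there is a finite quasi-\'etale cover $\widetilde X\to X$ with $\widetilde X$ a product of singular Calabi-Yau varieties and singular irreducible holomorphic symplectic varieties, the abelian factor being trivial because $\widetilde q(X)=0$. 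In particular $X$ admits only finitely many quasi-\'etale covers up to isomorphism, so the tower $\{f^{k}\colon X\to X\}_{k\ge1}$ of quasi-\'etale self-covers has bounded degree, forcing $\deg f=1$. Therefore $f\in\Aut(X)$, so $f^{*}$ acts on $\NS(X)$ modulo torsion through $\GL_2(\bZ)$, whence $|\det(f^{*}|_{\NS_{\bR}(X)})|=1$; but an int-amplified $f$ has all eigenvalues of $f^{*}|_{\NS_{\bR}(X)}$ of modulus $>1$ by \cite{Meng20}, which would force $|\det(f^{*}|_{\NS_{\bR}(X)})|>1$, a contradiction. Combining this with the reduction above, the int-amplified case in Proposition~\ref{reduce-Pic2} is vacuous, so we may assume $\lambda_2=1$, as claimed.

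The step I expect to be the main obstacle is ``$\deg f=1$'', i.e.\ the finiteness of the \'etale-in-codimension-one fundamental group of a klt weak Calabi-Yau variety: this rests on the structure theory for numerically trivial canonical class (\cite{HP19}, and behind it \cite{GGK19}), and one has to verify that, once $\widetilde q(X)=0$ removes the abelian part, the remaining Calabi-Yau and irreducible holomorphic symplectic factors really do possess a finite maximal quasi-\'etale cover. If one prefers, one may instead invoke directly that every surjective endomorphism of a weak Calabi-Yau variety is an automorphism and pass straight to the determinant argument. Everything else -- the diagonalization and positivity of $p,q$, and the implication ``automorphism $\Rightarrow$ not int-amplified'' -- is routine.
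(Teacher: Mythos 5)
Your reduction to Proposition~\ref{reduce-Pic2} is correct, and you are right that the whole content of the corollary is to dispose of the int-amplified alternative. But you take a genuinely different, and considerably heavier, route than the paper. The paper's proof is two lines: in the int-amplified case, since $X$ has at worst canonical (hence klt) singularities and $K_X\sim 0$ is pseudoeffective, \cite[Theorem~1.9]{Meng20} forces $X$ to be $Q$-abelian, and then KSC already holds by Theorem~\ref{Q-abelian}(2). No structure theory beyond Meng's theorem is invoked, and the argument does not need to show the int-amplified case is vacuous --- only that KSC holds there.

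You instead argue that the int-amplified case cannot occur at all: from $K_X\sim 0$ and the ramification formula you get that $f$ is quasi-\'etale; then you appeal to the decomposition of H\"oring--Peternell and Greb--Guenancia--Kebekus, together with $\widetilde q(X)=0$, to conclude finiteness of the quasi-\'etale (algebraic) fundamental group of $X_{\reg}$, hence $\deg f=1$ and $f\in\Aut(X)$; finally $|\det f^*|_{\NS(X)/\torsion}|=1$ contradicts the int-amplified condition. This is logically sound, and it establishes something slightly stronger (vacuousness rather than ``KSC holds''), but the step you yourself flag as the main obstacle --- finiteness of $\pi_1^{\alg}(X_{\reg})$ for a klt variety with $K_X\sim 0$ and $\widetilde q(X)=0$ --- is a serious input that the paper's proof simply does not need. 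If you do want the vacuousness conclusion, there is a cheaper way to get it from the same ingredient the paper uses: Meng's theorem gives that $X$ is $Q$-abelian, hence $\widetilde q(X)=\dim X>0$, contradicting $\widetilde q(X)=0$. That avoids HP19/GGK19 entirely and would shorten your second paragraph to a single sentence.

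One small point of precision: ``$X$ admits only finitely many quasi-\'etale covers up to isomorphism'' is stated loosely; what your argument actually needs is that every connected quasi-\'etale cover of $X$ has bounded degree, equivalently that $\pi_1^{\alg}(X_{\reg})$ is finite, which is what caps $(\deg f)^k$ and forces $\deg f=1$.
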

\begin{proof}
Since $K_X$ is pseduoeffective,  the proof follows from  Proposition \ref{reduce-Pic2},  \cite[Theorem 1.9]{Meng20} and Theorem \ref{Q-abelian}.
\end{proof}
The following is due to \cite[Lemma 10.4]{MZ19}.
\begin{proposition}
\label{MZ-10.4}
Let $f: X\to X$ be a surjective endomorphism of a normal projective variety $X$.
Suppose $f^*|_{\N^1(X)}$ is diagonalizable with positive eigenvalues $p\ge q\ge1$, an no other eigenvalues.
Let $H$ be an ample Caritier divisor.
Then $H=A+B$ for some nef $\bR$-Cartier divisors $A$ and $B$ such that $f^*A\equiv pA$ and $f^*B\equiv qB$.
\end{proposition}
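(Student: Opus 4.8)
The plan is to build $A$ and $B$ directly from the eigenspace decomposition of $f^{*}$ on $\N^1_{\bR}(X)$ and then to establish nefness by two ``averaging'' arguments: one for $A$ using the iterates of $f^{*}$, and a dual one for $B$ using the iterates of $(f^{*})^{-1}$. First I would record that a surjective endomorphism of a projective variety is finite, so that $f$ and every iterate $f^{n}$ are finite surjective; by the projection formula $(f^{n})^{*}\circ(f^{n})_{*}=(\deg f)^{n}\,\id$ on $\N^1_{\bR}(X)$, so $(f^{n})^{*}$ is invertible with inverse $(\deg f)^{-n}(f^{n})_{*}$. Since $f^{*}$ is diagonalizable with only the eigenvalues $p\ge q\ge 1$, we get a splitting $\N^1_{\bR}(X)=V_{p}\oplus V_{q}$ into eigenspaces. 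If $p=q$, then $f^{*}=p\cdot\id$ on $\N^1_{\bR}(X)$ and the statement is immediate (take $A=H$, $B=0$), so I would assume $p>q$. Letting $\pi_{p},\pi_{q}$ be the corresponding projections and setting $A:=\pi_{p}(H)$ and $B:=\pi_{q}(H)$, one gets $H\equiv A+B$, $(f^{n})^{*}A\equiv p^{n}A$ and $(f^{n})^{*}B\equiv q^{n}B$, and one may choose actual $\bR$-Cartier divisor representatives. What remains is to show that $A$ and $B$ are nef.

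For $A$ the argument is routine: since $f^{n}$ is finite surjective and $H$ is ample, $(f^{n})^{*}H$ is ample, hence $p^{-n}(f^{n})^{*}H$ is nef; but this class equals $A+(q/p)^{n}B$, which tends to $A$ as $n\to\infty$ because $q/p<1$. As $\Nef(X)$ is closed, $A$ is nef. For $B$ I would run the mirror-image argument with $(f^{n})^{*-1}$ in place of $(f^{n})^{*}$: applied to the ample class $H$, the classes $(f^{n})^{*-1}H$ should again be nef, and then $q^{n}\cdot(f^{n})^{*-1}H\equiv (q/p)^{n}A+B\to B$ forces $B$ to be nef.

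The step I expect to be the real obstacle --- the one genuine input beyond linear algebra and closedness of $\Nef(X)$ --- is showing that $(f^{n})^{*-1}H$ is nef, equivalently that nefness \emph{descends} along a finite surjective morphism $g\colon X\to X$: if $g^{*}E$ is nef, then $E$ is nef. The plan there is to note that every irreducible curve $C\subset X$ is the image $g(\widetilde{C})$ of some irreducible curve $\widetilde{C}\subset X$, so $g_{*}\widetilde{C}=\deg(g|_{\widetilde{C}})\cdot C$ with $\deg(g|_{\widetilde{C}})\ge 1$, whence $E\cdot C=\deg(g|_{\widetilde{C}})^{-1}(g^{*}E)\cdot\widetilde{C}\ge 0$. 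Granting this, $(f^{n})^{*-1}=(\deg f)^{-n}(f^{n})_{*}$ maps $\Nef(X)$ into itself, since $(f^{n})^{*}\bigl((f^{n})^{*-1}H\bigr)=H$ is nef; and then the argument of the previous paragraph goes through. This reproduces \cite[Lemma 10.4]{MZ19}.
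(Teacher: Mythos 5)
Your argument is essentially the same as the paper's: the paper sets $A=\lim_{i\to\infty}\varphi^{i}(H)/p^{i}$ and $B=\lim_{i\to\infty}q^{i}\varphi^{-i}(H)$ with $\varphi=f^{*}|_{\N^1(X)}$, which are exactly your eigenspace projections $\pi_p(H)$ and $\pi_q(H)$, and then observes that $A$ and $B$ are limits of nef (indeed ample) classes. The only difference is that you spell out why $q^{n}\varphi^{-n}(H)$ is nef by proving that nefness descends along the finite surjective map $f^{n}$; the paper takes this for granted, so your added justification is a harmless and welcome clarification rather than a different route.
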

\begin{proof}
If $p=q$, then $f^*|_{\N^1(X)}=p\id$ and we may take $A=H$ and $B=0$.
Assume $p>q$.
Let $\varphi:=f^*|_{\N^1(X)}$.
Let $A=\lim_{i\to+\infty}\varphi^i(H)/p^i$ and $B=\lim_{i\to\infty}q^i\varphi^{-i}(H)$.
Since $\varphi$ is diagonalizable with only eigenvalues $p$ and $q$, the above limits are $\bR$-Cartier and $H=A+B$.
It is clear that $\varphi(A)=pA$ and $\varphi(B)=qB$.
Note that $A$ and $B$ are limits of ample divisors.
So $A$ and $B$ are nef.
\end{proof}
\begin{remark}
To prove KSC is true for projective varieties with Picard number two, we wish construct a canonical height function $\hat{h}_D$ associated with some divisors $D$ ( e.g. the lead real eigendivisor of $(X,f)$)
\begin{equation*}
	 \hat{h}_D(x)=\lim_{n\to\infty}\frac{h_D(f^n(x))}{d_1(f)^{\dim X}} 
\end{equation*}
which  is positive at every point $p\in X(\overline{\bQ})$ with a Zariski dense $f$-orbit by using $A$ and $B$ in Proposition \ref{MZ-10.4}.
\end{remark}
Now we quote KSC for the TIR case on normal projective varieties admitting an int-amplified endomorphism  in \cite{MMSZ23} as follows.
\begin{theorem}
\label{TIR}
Let $f$ be a surjective endomorphism of a normal projective variety $X$ admitting an int-amplified endomorphism.
To show KSC holds for $f$,it suffices to show KSC is true for that $(X,f)$ is the TIR case as follows:
\begin{itemize}
\item[(1)] $\kappa(X,-K_X)=0$;
\item[(2)] $f^*D_1=d_1(f)D_1$ for some reduced effective Weil divisor $D_1\sim_{\bQ}-K_X$;
\item[(3)] There is an $f$-equivariant Fano contraction $\pi: X\to Y$ with $d_1(f)>d_1(f_Y)(\ge1)$, where  $f_Y$ is induced by $f$ on $Y$.
\end{itemize}	
\end{theorem}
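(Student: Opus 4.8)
The plan is to induct on $\dim X$, running an $f$-equivariant minimal model program to reduce the dimension and dealing with the end of the program by hand; the cases $\dim X\le 2$ are known by \cite[Theorem 1.3]{MZ19}. Replacing $f$ by a positive power alters neither $d_1(f)$ nor $\alpha_f(x)$ beyond taking powers, nor the existence of a Zariski dense orbit, so I may iterate $f$ freely, and after replacing $X$ by a $\bQ$-factorialization if necessary (a small, hence generically finite, morphism, to which Lemma \ref{generically finite} applies) I may assume $X$ is $\bQ$-factorial. Since $X$ admits an int-amplified endomorphism, after a suitable iterate there is an $f$-equivariant MMP
$$ X=X_0\dashrightarrow X_1\dashrightarrow\cdots\dashrightarrow X_r, $$
consisting of divisorial contractions and flips, with every $X_i$ still carrying an int-amplified endomorphism, and ending with $X_r$ for which either $K_{X_r}$ is nef or there is an $f_r$-equivariant Fano contraction $\pi\colon X_r\to Y$, $\dim Y<\dim X_r$ (cf. \cite{MZ19,Meng20}). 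Each divisorial contraction is a birational, hence generically finite, morphism, so KSC for $(X_i,f_i)$ is equivalent to KSC for $(X_{i+1},f_{i+1})$ by Lemma \ref{generically finite}; across a flip one passes to a common resolution and uses that the map is an isomorphism in codimension one and that $d_1$ is unchanged, so KSC again transfers. Thus it suffices to treat $(X_r,f_r)$, renamed $(X,f)$.

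If $K_X$ is nef, then $X$ is $Q$-abelian by \cite[Theorem 1.9]{Meng20} and KSC holds by Theorem \ref{Q-abelian}. Otherwise there is an $f$-equivariant Fano contraction $\pi\colon X\to Y$ with $\dim Y<\dim X$; the induced $f_Y$ on $Y$ is again int-amplified, so KSC holds for $(Y,f_Y)$ by the induction hypothesis. If $d_1(f|_\pi)\le d_1(f_Y)$, then $d_1(f)=d_1(f_Y)$ and KSC for $(X,f)$ follows from Proposition \ref{x=y-1}. Hence it remains to handle the case $d_1(f)=d_1(f|_\pi)>d_1(f_Y)\ge 1$, which is condition (3).

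In this remaining case I would show that $-K_X$ is the leading eigendirection of $f^*$ and then force $\kappa(X,-K_X)=0$. Take a nonzero nef $\bR$-Cartier divisor $D$ with $f^*D\equiv d_1(f)D$, given by Theorem \ref{eigenvector}. For a general fibre $F$ of $\pi$ one has $D|_F\not\equiv 0$: otherwise $D$ is numerically trivial over $Y$, hence (as $\pi$ is extremal with relative Picard number one) numerically a pullback $\pi^*D_Y$ with $D_Y\not\equiv 0$, and $f_Y^*D_Y\equiv d_1(f)D_Y$ would force $d_1(f_Y)\ge d_1(f)$, a contradiction. Since $-K_X$ is $\pi$-ample and $\pi$ has relative Picard number one, one deduces $D\equiv \lambda(-K_X)+\pi^*E$ for some $\lambda>0$ and some $\bR$-divisor $E$ on $Y$; as $d_1(f_Y)<d_1(f)$ this pullback term is irrelevant to the eigendivisor, so $\kappa(X,D)=\kappa(X,-K_X)$. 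By Proposition \ref{lead-eigen} one may assume $\kappa(X,D)\le 0$, and then $\kappa(X,-K_X)=0$, which is (1). Finally, the unique effective $\bQ$-divisor $D_1\sim_{\bQ}-K_X$ has support permuted by $f$, hence invariant after a further iterate, and the numerical identity on the ray $\bR_{\ge 0}[-K_X]$ pins the multiplicities of $D_1$, making it reduced with $f^*D_1=d_1(f)D_1$; this is (2). Now $(X,f)$ satisfies (1)--(3), so it is a TIR pair and the reduction is complete.

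The step I expect to be the main obstacle is this last one: establishing (1) and (2) together, i.e. showing that in the regime $d_1(f)>d_1(f_Y)$ the anticanonical class really does carry a totally invariant reduced representative. This is precisely where the hypothesis that $X$ admits an \emph{int-amplified} (rather than a merely surjective) endomorphism is indispensable, through the positivity of the ramification divisor of $f$ relative to the Fano contraction, and controlling the interplay of this ramification with the relative eigendivisor $D$ and the pullback contribution from $Y$ is the technical heart of the argument. A secondary difficulty is the transfer of KSC across a flip: since dynamical degrees of rational self-maps are not birational invariants in general, this is not a formal consequence of the birational invariance of KSC for automorphisms and must be read off from the structural description in \cite{MZ19} of how int-amplified endomorphisms interact with the steps of an equivariant MMP.
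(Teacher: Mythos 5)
The paper's proof of Theorem \ref{TIR} is a one-line citation to \cite[Theorem~1.7]{MZ19} together with \cite[Lemma~6.4]{MMSZ23}, which between them carry out exactly the equivariant-MMP reduction and the Fano-contraction/TIR dichotomy. What you have written is therefore not ``the same approach as the paper'' so much as an attempted reconstruction of the proofs of the two cited results, and as such it is a genuinely different, more self-contained route. Your outline of the reduction is structurally the right one: iterate $f$, run the $f$-equivariant MMP guaranteed by the int-amplified hypothesis, transfer KSC across each birational step by Lemma~\ref{generically finite}(1), handle the end of the MMP by $Q$-abelianity when $K_X$ becomes nef, and otherwise pass to a Fano contraction and split into the cases $d_1(f)=d_1(f_Y)$ (done by Proposition~\ref{x=y-1} and induction) and $d_1(f)>d_1(f_Y)$ (the TIR regime). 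Incidentally, your ``secondary difficulty'' about flips is not a real obstacle: a flip is a birational, hence generically finite, rational map, and once the equivariance of the MMP step is known, Lemma~\ref{generically finite}(1) applies verbatim, so no separate argument about birational invariance of dynamical degrees is needed.

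The genuine gap is exactly where you flag it: conditions (1) and (2) are not actually established. In the passage where you derive them, two steps are unjustified. First, you pass from a \emph{numerical} decomposition $D\equiv\lambda(-K_X)+\pi^*E$ to the \emph{Iitaka-dimension} identity $\kappa(X,D)=\kappa(X,-K_X)$; Iitaka dimension is not a numerical invariant, so this needs either an $\bR$-linear equivalence or a separate effectivity argument. Second, even granting $\kappa(X,D)\le 0$ from Proposition~\ref{lead-eigen}, you only get $\kappa(X,-K_X)\le 0$, not the required equality $\kappa(X,-K_X)=0$; the lower bound $\kappa(X,-K_X)\ge0$ must come from the int-amplified structure (positivity of the ramification divisor of the int-amplified map relative to the Fano contraction), and you describe this as ``the technical heart of the argument'' without carrying it out. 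The existence of a \emph{reduced} totally invariant $D_1\sim_\bQ -K_X$ with $f^*D_1=d_1(f)D_1$ (condition (2)) is likewise only sketched. These are precisely the contents of \cite[Lemma~6.4]{MMSZ23}, so your proposal correctly identifies what needs to be proved but stops short of proving it; as written, the argument is an accurate roadmap to the cited result rather than a proof.
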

 \begin{proof}
It follows from \cite[Theorem 1.7]{MZ19} and \cite[Lemma 6.4]{MMSZ23}.
\end{proof}
As a start to addressing Question \ref{que-proj-bundle} when $\dim Y>1$,  we show the following result.
\begin{theorem}
\label{projective-bundle}
Let $f$ be a surjective endomorphism of a projective bundle $X$ over a normal projective variety $Y$ with $\rho(Y)=1, \pi: X\to Y$ the projection.
Replacing $f$ with its iterate, $f$ descents to  $g$ on $Y$.
To prove KSC is true for $(X,f)$,we may assume that one of the following case holds:
\begin{itemize}
	\item[(1)]  $d_1(f)=d_1(f|_\pi)>d_1(g)>1$, and so $f$ is int-amplified.
	\item[(2)]  $d_1(f)=d_1(f|_\pi)>d_1(g)=1$,  and the morphisms between fibers of $\pi$ induced by $f$ are not isomorphism.
\end{itemize}
Notice that the degree of $f$ is greater than one.
\end{theorem}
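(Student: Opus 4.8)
The plan is to identify the two eigenvalues of $f^{*}|_{\NS_{\bR}(X)}$ with $d_{1}(g)$ and $d_{1}(f|_{\pi})$ and then to combine this with Proposition~\ref{reduce-Pic2} and the product formula for dynamical degrees. Since $X$ is a $\bP^{r}$-bundle over $Y$ and $\rho(Y)=1$, we have $\rho(X)=\rho(Y)+1=2$. For the descent asserted in the statement: $f^{*}$ is a linear automorphism of $\NS_{\bR}(X)$ (injective because $f$ is surjective, hence bijective since $\dim_{\bR}\NS_{\bR}(X)=2$) preserving the nef cone $\Nef(X)$, so it permutes the two extremal rays of $\Nef(X)$; after replacing $f$ by $f^{2}$ it fixes each of them, in particular the pullback ray $\bR_{\ge 0}[\pi^{*}H_{Y}]$ for $H_{Y}$ ample on $Y$. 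Thus $f^{*}\pi^{*}H_{Y}\equiv c\,\pi^{*}H_{Y}$ for some $c>0$, so $\pi\circ f$ and $\pi$ contract the same curves and hence $\pi\circ f=g\circ\pi$ for a surjective endomorphism $g$ of $Y$. I would keep in mind that every condition occurring in cases (1) and (2), together with ``int-amplified'' and ``$\deg f>1$'', is invariant under replacing $f$ by an iterate.

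Next I would pin down the eigenvalues. By Remark~\ref{non-dense} we may assume $d_{1}(f)>1$, and by Proposition~\ref{reduce-Pic2} --- which also settles the case $|\lambda_{1}|=|\lambda_{2}|$ via \cite{Sano17} --- we may assume, after one further replacement of $f$ by $f^{2}$, that $f^{*}|_{\NS_{\bR}(X)}$ is diagonalizable with two \emph{distinct} positive eigenvalues $p>q\ge 1$, so that $d_{1}(f)=p$. Since $f^{*}\pi^{*}=\pi^{*}g^{*}$, the line $\pi^{*}\NS_{\bR}(Y)\subset\NS_{\bR}(X)$ is $f^{*}$-invariant, and because $\rho(Y)=1$ the induced action is multiplication by $d_{1}(g)$ (the unique eigenvalue of $g^{*}|_{\NS_{\bR}(Y)}$). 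On the quotient line $\NS_{\bR}(X)/\pi^{*}\NS_{\bR}(Y)$, spanned by the relative hyperplane class, $f^{*}$ acts by the fibre degree $\delta$, where $f$ restricts on a general fibre $X_{t}\cong\bP^{r}$ to an endomorphism $X_{t}\to X_{g(t)}$ of degree $\delta$; unwinding Definition~\ref{relative dg} by an intersection computation on $\bP^{r}$ gives $d_{1}(f|_{\pi})=\delta$. Hence the eigenvalues of $f^{*}|_{\NS_{\bR}(X)}$ are exactly $d_{1}(g)$ and $d_{1}(f|_{\pi})$, the product formula gives $d_{1}(f)=\max\{d_{1}(g),d_{1}(f|_{\pi})\}=p$, and $p>q$ forces $d_{1}(g)\neq d_{1}(f|_{\pi})$.

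Finally I would run the dichotomy. If $d_{1}(g)>d_{1}(f|_{\pi})$, then $d_{1}(f)=d_{1}(g)>1$ and $d_{1}(f|_{\pi})\le d_{1}(g)$; moreover, since $\rho(Y)=1$ and $d_{1}(g)>1$, the lead real eigendivisor $D_{Y}$ of $(Y,g)$ furnished by Theorem~\ref{eigenvector} is numerically equivalent to an ample divisor, hence ample, so $\kappa(Y,D_{Y})=\dim Y>0$ and KSC holds for $(Y,g)$ by \cite[Proposition 5.2(1)]{LM21}; then KSC holds for $(X,f)$ by Proposition~\ref{x=y-1}, and there is nothing more to prove in this case. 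So we may assume $d_{1}(f|_{\pi})>d_{1}(g)$, that is $d_{1}(f)=d_{1}(f|_{\pi})>d_{1}(g)\ge 1$. If $d_{1}(g)>1$, both eigenvalues of $f^{*}|_{\NS_{\bR}(X)}$ exceed $1$, so $f$ is int-amplified, which is case~(1). If $d_{1}(g)=1$, we are in case~(2), and $\delta=d_{1}(f|_{\pi})>1$ is precisely the assertion that the fibrewise maps $X_{t}\to X_{g(t)}$ are not isomorphisms. In either case $\delta>1$, so $\deg f=\delta\cdot\deg g>1$, which is the closing remark.

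The step I expect to cost the most care is the identification of the quotient eigenvalue with $d_{1}(f|_{\pi})$ --- equivalently, the bookkeeping that for a $\bP^{r}$-bundle the relative first dynamical degree is simply the fibre degree --- together with making the descent $\pi\circ f=g\circ\pi$ genuinely algebraic rather than merely numerical. Once these are in hand, the statement is a formal consequence of Proposition~\ref{reduce-Pic2}, the product formula, Theorem~\ref{eigenvector}, Proposition~\ref{x=y-1}, and the classical cases of KSC for polarized endomorphisms.
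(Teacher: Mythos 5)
Your proposal is correct and takes essentially the same route as the paper: descent to $Y$, reduction via Proposition~\ref{reduce-Pic2}, the product formula and Proposition~\ref{x=y-1} to dispose of $d_{1}(g)\ge d_{1}(f|_{\pi})$, then the dichotomy on $d_{1}(g)$. Two remarks. First, where you derive the descent $\pi\circ f=g\circ\pi$ directly from $f^{*}$ permuting the two extremal rays of the two-dimensional nef cone, the paper simply cites \cite[Lemma~2.3]{LM21}; your cone-theoretic argument is a clean self-contained substitute, but you should invoke the rigidity lemma explicitly at the last step (``contract the same curves'' gives the factorization only because $\pi$ has connected fibres). Second, the closing formula $\deg f=\delta\cdot\deg g$ is off: with $r=\dim X-\dim Y$ the fibrewise map $\bP^{r}\to\bP^{r}$ given by degree-$\delta$ forms has topological degree $\delta^{r}$, so $\deg f=\delta^{r}\cdot\deg g$; this is in fact exactly the intersection computation the paper carries out, and the conclusion $\deg f>1$ is unaffected since $\delta>1$.
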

\begin{proof}
Replacing $f$ with its iterate, by \cite[Lemma 2.3]{LM21} we may assume that $f$ induces an endomorphism $g: Y\to Y$ such that $g\circ \pi=\pi\circ f$  (cf. discussion before \cite[Theorem 2]{Amerik01}).
Since $\rho(Y)=1$, $g$ is polarized and KSC is true for $g$ by  \cite[Theorem 5]{KS16}.
Therefore, we may assume $d_1(f)=d_1(f|_\pi)>d_1(g)$ by Lemma \ref{generically finite} and the product formula on dynamical degrees.
Note that $\NS_{\bR}(X)=\bR\mathcal O_X(1)\oplus \pi^*\NS_{\bR}(Y)$.
If $d_1(g)>1$, then the eigenvalues of $f^*:\NS_{\bR}(X)\to \NS_{\bR}(X)$ are $d_1(f)$ and $d_1(g)$ and, then they have modulus larger than one.
Thus $f$ is  int-amplified  by \cite[Theorem 1.1]{Meng20}. 
As a result, $\deg f>1$ by \cite[Lemma 3.7]{Meng20}.
Now Suppose $d_1(f)>d_1(g)=1$. 
Then the morphisms between fibers of $\pi$ induced by $f$ are not isomorphism.
Indeed, let $\pi^{-1}(y)$ be a closed fiber.
Since $\NS_{\bR}(X)=\bR\mathcal O_X(1)\oplus \pi^*\NS_{\bR}(Y)\cong\bR^2$ and $f^*$ fixes $\pi^*\NS_{\bR}(Y)$, we see that $f^*\mathcal O_X(1)=d_1(f)\mathcal O_X(1)+\pi^*D$ for some divisor $D$ on $Y$.
\begin{equation*}\begin{split}
  (f_*[\pi^{-1}(y)]\cdot\mathcal O_X(1)^{\dim X-\dim Y})&=([\pi^{-1}(y)]\cdot f^*\mathcal O_X(1)^{\dim X-\dim Y})
  \\&=([\pi^{1}(y)]\cdot (d_1(f)\mathcal O_X(1))^{\dim X-\dim Y})
  \\&=d_1(f)^{\dim X-\dim Y}.
\end{split}\end{equation*}
This shows that the degree of the morphism $f: \pi^{-1}(y)\to \pi^{-1}(g(y))$ is $d_1(f)^{\dim X-\dim Y}$ which is greater than one. 
So we also have $\deg f>1$.
\end{proof}
\begin{remark}
The two reduced cases in Theorem \ref{projective-bundle} are very difficult.
\begin{itemize}
\item[(1)] In the first case, it suffices to show  KSC for the TIR case by  Theorem \ref{TIR}.
\item[(2)] In the second case we  need to study the relations of arithmetic degrees and relatively dynamical degrees on projective bundles, but the morphisms on fibers induced by $f$ may not be an endomorphism.
In general, let $\pi: X\to Y$ be a surjective morphism and a surjective endomorphism  $f$ of $X$ descending to a surjective endomorphism  $g$ on $Y$.
Then a question is asked as follows.
\end{itemize}
\end{remark}
\begin{question}\label{que-f>g}
Let  $\pi: X\to Y$ be a surjective morphism between normal projective varieties $X$ and $Y$ with $\dim X>\dim Y>0$ and a surjective endomorphism  $f$ of $X$ descents to $g$ on $Y$.
Is KSC true for $(X,f)$ when $d_1(f)>d_1(g)$?
\end{question}
\begin{remark}
When $\dim Y=1$, Question \ref{que-f>g} is affirmative  if $\kappa(X)\ge0$, $Y=\bP^1$ and $f$ is an automorphism (cf. Theorem \ref{KSC-P1}).
\end{remark}

\end{document}